\newcommand{\ZZ}{{\mathbb{Z}}}
\newcommand{\RR}{{\mathbb{R}}}
\newcommand{\KK}{{\mathbb{K}}}
\newcommand{\cM}{{\mathcal{M}}}
\newcommand{\vones}{\mathbbm{1}}
\newcommand{\frp}{\mathfrak{p}}
\newcommand{\frS}{\mathfrak{S}}
\newcommand{\SetOf}[2]{\left\{#1\vphantom{#2}\,\right.\left|\,\vphantom{#1}#2\right\}}
\newcommand{\smallSetOf}[2]{\{#1\,|\,#2\}}
\DeclareMathOperator{\conv}{conv}
\DeclareMathOperator{\pos}{pos}
\DeclareMathOperator{\rank}{rank}
\DeclareMathOperator{\Sym}{Sym}
\DeclareMathOperator{\tconv}{tconv}
\DeclareMathOperator{\val}{val}
\newcommand\cprime{$'$}
\newcommand{\Atightspan}[1]{{\mathcal{T}(#1)}}
\newcommand{\subdiv}{\Sigma}
\newcommand{\Hypersimplex}[2]{{\Delta(#1,#2)}}
\newcommand{\TT}{{\mathbb{T}}}
\DeclareMathOperator{\tdet}{tdet}
\newcommand{\Gr}[2]{{\mathrm{Gr}(#1,#2)}}
\newcommand{\Dr}[2]{{\mathrm{Dr}(#1,#2)}}
\theoremstyle{plain}
\newtheorem{theorem}{Theorem}
\newtheorem*{theorem*}{Theorem}
\newtheorem{proposition}[theorem]{Proposition}
\newtheorem{corollary}[theorem]{Corollary}
\newtheorem{lemma}[theorem]{Lemma}
\newtheorem{question}[theorem]{Question}
\theoremstyle{definition}
\newtheorem{example}[theorem]{Example}
\newtheorem{remark}[theorem]{Remark}
\begin{document}

\title[Dressians and Their Rays]{Dressians, Tropical Grassmannians,\\ and Their Rays}

\author[Herrmann, Joswig and Speyer]{Sven Herrmann \and Michael Joswig \and David Speyer}
\address{Sven Herrmann, School of Computing Sciences, University of East Anglia, Norwich, NR4~7TJ, UK}
\email{sherrmann@mathematik.tu-darmstadt.de}
\address{Michael Joswig, Fachbereich Mathematik, TU Darmstadt, 64289 Darmstadt, Germany}
\email{joswig@mathematik.tu-darmstadt.de}
\address{David Speyer, Department of Mathematics, 530 Church Street, Ann Arbor, MI 48109-1043 USA}
\email{speyer@umich.edu}
\thanks{The first author was supported by a fellowship within the Postdoc-Program of the German
  Academic Exchange Service (DAAD).  The second author was supported by DFG Priority Program 1489
  ``Experimental Methods in Algebra, Geometry and Number Theory''.}

\date{\today}
\begin{abstract}
  The Dressian $\Dr kn$ parametrizes all tropical linear spaces, and it carries a natural fan
  structure as a subfan of the secondary fan of the hypersimplex $\Delta(k,n)$.  We explore the
  combinatorics of the rays of $\Dr kn$, that is, the most degenerate tropical planes, for arbitrary
  $k$ and $n$.  This is related to a new rigidity concept for configurations of $n-k$ points in the
  tropical $(k{-}1)$-torus.  Additional conditions are given for $k=3$.  On the way, we compute the
  entire fan $\Dr38$.
\end{abstract}

\maketitle

\section{Introduction}

The classical Grassmannian parametrizes all linear spaces over a fixed field.  Its tropicalization,
the \emph{tropical Grassmannian}~\cite{MR2071813}, parametrizes those tropical linear spaces which
arise as tropicalizations of ordinary linear spaces over a field of Puiseux series.  Studying the
tropical Grassmannians and related concepts is motivated by questions in algebraic geometry, for
example see \cite{HackingKeelTevelev}, as well as by applications in algorithmic biology, for
example see~\cite{ASCB}.  Here we are aiming at exploring the tropical Grassmannian $\Gr{k}{n}$ from
the combinatorial point of view.  To this end, we study an outer approximation, the \emph{Dressian}
$\Dr{k}{n}$, which is the polyhedral fan of those regular subdivisions of the hypersimplex
$\Delta(k,n)$ which have the property that each cell is a matroid polytope.  In this manner,
$\Dr{k}{n}$ is a subfan of the secondary fan of $\Delta(k,n)$. Both fans have a non-trivial
$n$-dimensional lineality space and we will consider them (and all other fans in this paper) and
their cones always modulo this lineality, meaning that the smallest non-trivial cones (of dimension
$n+1$) are considered to be one-dimensional and called rays.  Alternatively, as a set, the Dressian
can also be described as the tropical pre-variety which arises as the intersection of the tropical
hypersurfaces defined by the $3$-term Pl\"ucker relations; for other characterizations see
Proposition~\ref{prop:matroid_decomposition} below. Therefore, Dressians were called \emph{tropical
  pre-Grassmannians} in~\cite{MR2071813}.  From the description via the $3$-term Pl\"ucker
relations, it follows that $\Dr{k}{n}$ contains $\Gr{k}{n}$ as a set.  Asymptotically, for fixed $k$
and growing $n$ the Dressians $\Dr{k}{n}$ become much larger than the tropical Grassmannians
$\Gr{k}{n}$ \cite[Thm.~3.6]{MR2515769}.

Products of simplices naturally arise in this context as the vertex figures of hypersimplices, and
the relationship between the secondary fan structures of these two polytopes has been studied
previously, for example, by Kapranov~\cite[Sec.~1.4]{MR1237834}.  Develin and Sturmfels
\cite[Thm.~1]{DevelinSturmfels04} showed that the regular subdivisions of products of simplices are
dual to tropical convex hulls of finitely many points.  Our first main result is
Corollary~\ref{cor:embedding}:

\begin{theorem*}
  For $n>k>1$ there is a piecewise-linear embedding $\tau$ of the secondary fan of the product of
  simplices $\Delta_{k-1}\times\Delta_{n-k-1}$ into the Dressian $\Dr{k}{n}$.  The image of $\tau$
  is contained in the tropical Grassmannian $\Gr{k}{n}$ as a set.
\end{theorem*}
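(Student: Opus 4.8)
The plan is to realize $\tau$ through the classical Stiefel-type construction of a linear space from a matrix, read off tropically. Fix the vertex $S=\{1,\dots,k\}$ of $\Hypersimplex kn$, write $\bar S=\{k{+}1,\dots,n\}$, and identify the vertices of $\Delta_{k-1}\times\Delta_{n-k-1}$ with the pairs $(i,j)\in S\times\bar S$, so that a weight function is a matrix $M=(m_{ij})$. For a $k$-set $T$ put $T_0=T\cap S$ and $T_1=T\cap\bar S$, and define
\[
  \tau(M)_T \;=\; \tdet M[\,S\setminus T_0,\,T_1\,],
\]
the tropical minor of the submatrix of $M$ on rows $S\setminus T_0$ and columns $T_1$; equivalently $\tau(M)_T$ is the tropical maximal minor of the $k\times n$ matrix $[\,I_k\mid M\,]$ on the columns indexed by $T$ (note $|S\setminus T_0|=|T_1|$, so the minor is well defined). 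Each such minor is a minimum of linear forms in the entries of $M$, so $\tau$ is piecewise linear. Taking $T=S\setminus\{i\}\cup\{j\}$ gives $\tau(M)_T=m_{ij}$, so every entry of $M$ is read off from $\tau(M)$; together with the routine check that the row-and-column lineality of $\SecondaryFan{\Delta_{k-1}\times\Delta_{n-k-1}}$ lands in the lineality $p_T\mapsto p_T+\sum_{t\in T}a_t$ of $\Dr kn$, this shows that $\tau$ is injective modulo lineality.

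To see that the image lies in $\Gr kn$ — and hence, since $\Gr kn\subseteq\Dr kn$, that $\tau$ really maps into the Dressian — lift $M$ to a matrix $\tilde M$ over the Puiseux series field $\KK$ by setting $\tilde M_{ij}=c_{ij}t^{m_{ij}}$ for generic coefficients $c_{ij}$, and let $L\subseteq\KK^n$ be the row space of $[\,I_k\mid\tilde M\,]$. Expanding the maximal minor of $[\,I_k\mid\tilde M\,]$ on columns $T$ along the identity block reduces it to $\pm\det\tilde M[S\setminus T_0,T_1]$. Distinct permutations contribute distinct monomials in the $c_{ij}$, so for generic coefficients the leading terms of the permutation expansion do not cancel and the valuation of this determinant equals $\tdet M[S\setminus T_0,T_1]=\tau(M)_T$. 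Hence $\val(p(L))=\tau(M)$, so $\tau(M)\in\Gr kn$.

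It remains to match the fan structures, and here the crucial observation is local. The star of the vertex $e_S$ in the regular subdivision $\subdivision{\tau(M)}{\Hypersimplex kn}$ is the cone over a regular subdivision of the vertex figure of $\Hypersimplex kn$ at $e_S$, and this vertex figure is exactly $\Delta_{k-1}\times\Delta_{n-k-1}$, with the vertex $(i,j)$ sitting on the edge from $e_S$ to $e_{S\setminus i\cup j}$. Since all these edges are equivalent, the heights induced on the vertex figure are the differences $\tau(M)_{S\setminus i\cup j}-\tau(M)_S=m_{ij}$ (using $\tau(M)_S=\tdet M[\emptyset,\emptyset]=0$). Thus the star of $e_S$ reproduces precisely $\subdivision{M}{\Delta_{k-1}\times\Delta_{n-k-1}}$; in particular distinct subdivisions of the product yield distinct subdivisions of the hypersimplex, so $\tau$ separates the cones of $\SecondaryFan{\Delta_{k-1}\times\Delta_{n-k-1}}$.

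The remaining, and main, point is to show that $\subdivision{\tau(M)}{\Hypersimplex kn}$ stays constant as $M$ ranges over a single relatively open cone of $\SecondaryFan{\Delta_{k-1}\times\Delta_{n-k-1}}$, so that $\tau$ carries cones into cones. This is where the real work lies, since a priori the global subdivision of the hypersimplex could depend on more than the local data at $e_S$. The plan is to invoke the Develin--Sturmfels correspondence to identify $\subdivision{M}{\Delta_{k-1}\times\Delta_{n-k-1}}$ with the tropical complex of the columns of $M$, and then to describe the cells of $\subdivision{\tau(M)}{\Hypersimplex kn}$ — which are matroid polytopes by the realizability above together with Proposition~\ref{prop:matroid_decomposition} — directly in terms of that tropical complex. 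Once the cells are shown to be indexed by the combinatorial type of the tropical complex, they are constant on each secondary cone of the product; combined with the piecewise-linearity and injectivity already established, this exhibits $\tau$ as a piecewise-linear embedding of $\SecondaryFan{\Delta_{k-1}\times\Delta_{n-k-1}}$ into $\Dr kn$ whose image lies in $\Gr kn$.
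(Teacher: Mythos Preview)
Your setup matches the paper's: the same map $\tau$ via tropical minors of $[I_k\mid M]$, the same injectivity argument (the paper's Lemma~\ref{lem:Phi-tau}), and the same Puiseux-series lift to land in $\Gr kn$ (Proposition~\ref{prop:in-Grassmannian}). Your observation that the vertex-figure subdivision at $e_S$ recovers the original subdivision of the product is also the paper's Proposition~\ref{prop:vertex_figure}.

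But your final paragraph is only a plan, not a proof. You correctly identify the heart of the matter---that the hypersimplex subdivision induced by $\tau(M)$ must depend only on the secondary cone of $M$---and you propose to show this by describing the cells ``directly in terms of the tropical complex,'' yet you never carry this out. This step is the substantive content of the theorem; everything before it is bookkeeping, and you yourself flag it as ``where the real work lies.''

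The paper closes this gap differently and more concretely (Theorem~\ref{thm:tight-span}). Rather than analyzing the cells of $\subdivision{\tau(M)}{\Hypersimplex kn}$ after the fact, it \emph{builds} a subdivision $\Sigma$ of $\Hypersimplex kn$ directly from the product subdivision $\Gamma$ by coning from the chosen vertex:
\[
  \Sigma \ = \ \bigl\{\pos\gamma\cap(\Hypersimplex kn-e_{[k]}) \; : \; \gamma\in\Gamma\bigr\}.
\]
Two things then have to be checked. First, that this really is a polytopal subdivision of the hypersimplex; the nontrivial point is that coning introduces no new vertices, and this uses the explicit form of the facet hyperplanes of $\Hypersimplex kn$ versus the hyperplanes $\sum_{i\in A\cup B}x_i=0$ spanned by vertices of the vertex figure. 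Second, that when $\Gamma$ is the regular subdivision for $M$, this $\Sigma$ coincides with the one induced by the height function $\tau(M)$; the paper reduces this to comparing a continuous minimum $\min_w\sum w(i,j)m_{ij}$ over nonnegative flows with the $0/1$ minimum defining $\tdet$, and invokes integrality of min-cost flow. Once both are established, the hypersimplex subdivision manifestly depends only on $\Gamma$, so $\tau$ carries secondary cones into Dressian cones. Your Develin--Sturmfels route is not unrelated---the paper does prove $\Atightspan\Gamma\cong\Atightspan\Sigma$---but it does not bypass the need for an explicit identification of the cells, and that is precisely what you have left undone.
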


The secondary fan of $\Delta_{k-1}\times\Delta_{n-k-1}$ has the same dimension $nk-n-k^2+1$ as
$\Gr{k}{n}$ (modulo lineality), and $\tau$ is a homeomorphism onto its image. For each of the
$\binom{n}{k}$ vertices of $\Delta(k,n)$, we get an inner approximation of the tropical
Grassmannians in this manner.  Via the same approach we also relate non-regular subdivisions of
products of simplices to non-regular matroid subdivisions of hypersimplices.  After making a
preprint version of our result available we learned that Rinc\'on independently proved a similar
statement~\cite{Rincon}.

The goal of the present paper is to combinatorially describe the Dressians as well as possible.
Since, in a way, the Dressians encode all of matroid theory this is quite an endeavor.  Our
expectations must therefore be modest.  So we focus on the rays of $\Dr{k}{n}$, that is, on those
tropical linear spaces corresponding to matroid decompositions of $\Delta(k,n)$ which can only be
coarsened in a trivial way.  We call a configuration of $k$ points in the tropical torus $\TT^{n-1}$
\emph{tropically rigid} if it does correspond to a ray of the secondary fan of
$\Delta_{k-1}\times\Delta_{n-1}$.  Via the aforementioned Corollary~\ref{cor:embedding}, tropically
rigid point configurations give rise to rays of the Dressians which are also rays of the tropical
Grassmannians.  The known coarsest matroid subdivisions of the hypersimplices are the \emph{splits}
(with precisely two maximal cells) \cite[Prop.~5.2]{MR2502496} and the \emph{$3$-splits} (with
precisely three maximal cells sharing a common codimension-$2$ cell) \cite[Cor.~6.4]{Herrmann09}.
It turns out that all coarsest matroid subdivisions of $\Delta(3,8)$ but one (up to symmetry) are
induced by coarsest subdivisions of a vertex figure; see Figures~\ref{fig:rigid-special},
\ref{fig:trop-rigid} and~\ref{fig:non-planar} below.

Our second main result is the explicit computation of the entire fan $\Dr{3}{8}$ via
\texttt{poly\-make}~\cite{DMV:polymake}.  This computation, in particular, leads to our
Theorem~\ref{thm:polymake}:

\begin{theorem*}
  The Dressian $\Dr38$ is a non-pure non-simplicial nine-dimensional polyhedral fan with $f$-vector
  \begin{equation*}
    \begin{aligned}
      (1;&\,15,\!470;\,642,\!677;\,8,\!892,\!898 ;\,57,\!394,\!505 ;\,194,\!258,\!750
      ; \\
      &\,353,\!149,\!650 ;\,324,\!404,\!880;\,117,\!594,\!645 ;\,113,\!400 )\,.
    \end{aligned}
  \end{equation*}
  Modulo the natural $\Sym(8)$-symmetry, the $f$-vector reads
  \[
  (1;\,12;\,155;\, 1,\!149;\, 5,\!013;\,12,\!737 ;\,18,\!802 ;\,14,\!727 ;\,4,\!788;\,14)\,.
  \]
  There are $116,\!962,\!265$ maximal cones, $113,\!400$ of dimension $9$ and
  $116,\!848,\!865$ of dimension $8$. Up to symmetry, there are $4,\!748$ maximal cones,
  $14$ of dimension $9$ and $4,\!734$ of dimension $8$.
\end{theorem*}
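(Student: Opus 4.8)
The statement is a machine computation, so the plan is to realise $\Dr38$ as an explicit polyhedral fan, to enumerate its cones modulo the $\Sym(8)$-symmetry, and to recover the absolute $f$-vector from the symmetry-reduced one by orbit counting. First I would encode $\Dr38\subseteq\RR^{\binom 83}=\RR^{56}$ as the tropical prevariety cut out by the three-term Plücker relations: for every index $m\in\{1,\dots,8\}$ and every $4$-subset $\{i,j,k,l\}$ of the complement, impose that $\min\{p_{mij}+p_{mkl},\,p_{mik}+p_{mjl},\,p_{mil}+p_{mjk}\}$ be attained at least twice. By Proposition~\ref{prop:matroid_decomposition} a weight vector $p$ satisfies all of these relations exactly when the regular subdivision of $\Hypersimplex38$ induced by $p$ is a matroid subdivision; hence $\Dr38$ is precisely the subfan of the secondary fan $\SecondaryFan{\Hypersimplex38}$ supported on matroidal subdivisions. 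I would work throughout modulo the $8$-dimensional lineality space, so that a coarsest nontrivial matroid subdivision becomes a ray and the top cells of the fan have dimension nine.

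The enumeration itself I would run incrementally over the face poset, always keeping only $\Sym(8)$-orbit representatives. The starting data are the rays, i.e.\ the coarsest matroid subdivisions: these fall into the twelve orbits recorded by the ``$12$'' in the reduced $f$-vector and comprise the $15{,}470$ rays. A candidate higher cone is obtained by adjoining further rays; for the weight $\sum_{r\in R}r$ one computes the induced regular subdivision, tests matroidality of all its cells with \texttt{polymake}, and, if it passes, records the associated secondary cone together with its extreme rays and dimension. The face relations among cones are exactly the refinement relations among the corresponding matroid subdivisions, which lets me reconstruct the face poset; canonicalising each cone under $\Sym(8)$ before storing it keeps the search at the level of one representative per orbit. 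Counting these orbits yields the reduced $f$-vector $(1;12;155;1{,}149;5{,}013;12{,}737;18{,}802;14{,}727;4{,}788;14)$, and the absolute count in dimension $d$ is recovered by the orbit--stabilizer theorem as $f_d=\sum_{[\sigma]}\card{\Sym(8)}/\card{\mathrm{Stab}(\sigma)}$, the sum running over the $d$-dimensional orbit representatives with $\card{\Sym(8)}=40{,}320$. A cone is declared maximal precisely when it is not a face of a higher-dimensional cone; since matroid subdivisions of $\Hypersimplex38$ need not be refinable to a common top dimension, this produces maximal cones both in dimension nine and in dimension eight, so the fan is genuinely non-pure, and the face numbers already force non-simpliciality.

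I expect the main obstacle to be sheer scale: with about $1.17\times10^8$ cones one can never afford to leave the level of orbit representatives, the matroidality test and the orbit-size arithmetic must be exactly right, and deciding maximality correctly for the non-pure structure is delicate. I would therefore lean on internal consistency relations as verification. The dimension-nine entry $113{,}400$ must equal the number of maximal nine-cones; adding the $116{,}848{,}865$ maximal eight-cones must give the stated $116{,}962{,}265$ maximal cones; and the remaining $117{,}594{,}645-116{,}848{,}865=745{,}780$ eight-cones must be exactly the facets of the nine-cones, which can be re-derived independently by summing facet incidences of the nine-cones. Finally I would cross-check the combinatorics of the rays: the twelve ray orbits should match the known coarsest matroid subdivisions — the splits of \cite[Prop.~5.2]{MR2502496}, the $3$-splits of \cite[Cor.~6.4]{Herrmann09}, and, by the discussion around Corollary~\ref{cor:embedding}, all but one induced from a subdivision of a vertex figure — and the dimension jump from eight to nine records that $\Gr38\subsetneq\Dr38$, consistent with the dimension $nk-n-k^2+1=8$ of $\Gr38$.
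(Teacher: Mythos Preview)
Your proposal runs in the opposite direction from the paper: you build the fan bottom-up from the rays, whereas the paper enumerates the \emph{maximal} cones first and only then descends to the lower faces. The paper's key device is not the refinement poset but an explicit finite encoding: each $3$-term Pl\"ucker relation corresponds to an octahedral $3$-face of $\Hypersimplex{3}{8}$, and a cone of $\Dr{3}{8}$ is determined by recording, for every such octahedron, which (if any) of its three splits is induced. Maximal cones are found by a backtracking search over these ``sequences of octahedral splits'', pruned by the observation that the restriction to each $\Hypersimplex{3}{7}$-facet must already be a valid sequence for $\Dr{3}{7}$; this recursive use of the previously computed $\Dr{3}{7}$ is what makes the search terminate in the reported $\sim200$ hours. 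Only after the maximal cones are in hand does the paper compute their rays---so the twelve ray orbits are an \emph{output}, not an input---and then, for each dimension $d$, list the $d$-faces of one maximal cone per orbit and reduce modulo $\Sym(8)$.

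Your scheme therefore has a circularity and a feasibility gap. You take the twelve ray orbits as ``starting data'', but classifying all coarsest matroid subdivisions of $\Hypersimplex{3}{8}$ is part of what the theorem asserts; splits and $3$-splits are known a priori, but the remaining six orbits (including the non-planar one in Figure~\ref{fig:non-planar}) are discovered only by the computation itself. Moreover, ``adjoining further rays and testing $\sum_{r\in R}r$'' is not yet a terminating complete search: a non-simplicial $d$-cone has more than $d$ extreme rays, and you have not specified which subsets $R$ are tried nor argued why every cone is reached. Finally, your facet consistency check is slightly off as stated: the $745{,}780$ non-maximal $8$-cones are those lying in \emph{some} nine-cone, whereas literally ``summing facet incidences of the nine-cones'' counts each such $8$-cone with multiplicity equal to the number of nine-cones containing it.
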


We are indebted to an anonymous referee for several very helpful suggestions.

\section{Tropical Polytopes and Matroid Subdivisions} \label{sec:Polytope}

A map $\pi$ from $\tbinom{[n]}{k}$, the set of $k$-subsets of $[n]=\{1,2,\dots,n\}$, to the set
$\RR$ is called a \emph{(finite) tropical Pl\"ucker vector} if the minimum of the three numbers
\begin{equation}\label{eq:3term}
  \pi(\rho ij)+\pi(\rho\ell m) \, , \quad \pi(\rho i\ell)+\pi(\rho jm) \, , \quad \pi(\rho
  im)+\pi(\rho j\ell)
\end{equation}
is attained at least twice for each choice $\rho$ of a $(k{-}2)$-subset of $[n]$ and pairwise
distinct $i,j,\ell,m\in[n]\setminus\rho$; here we use the common shorthand notation $\rho ij$ for
the set $\rho\cup\{i,j\}$.  Condition~\eqref{eq:3term} is equivalent to requiring that $\pi$ is
contained in the tropical pre-variety which arises as the intersection of the tropical hypersurfaces
of all $3$-term Pl\"ucker relations~\cite{MR2071813}; this tropical pre-variety is the
\emph{Dressian} $\Dr kn$.  Throughout this paper we assume that $n>k>0$.

A particularly interesting class of finite tropical Pl\"ucker vectors comes about as
follows. Consider a matrix $V\in\RR^{k\times (n-k)}$.  The \emph{augmented matrix} of $V$ is the
$k{\times}n$-matrix $\bar V=(E_k|V)$, where $E_k$ is the \emph{tropical identity matrix} of rank $k$
(the $k{\times}k$-matrix with $0$ on the diagonal and coefficients equal to $\infty$ otherwise), and
$(A|B)$ denotes the block column matrix formed from the columns of $A$ and $B$.  Each $k$-element
subset $\sigma\subseteq[n]$ specifies a $k{\times}k$-submatrix $\bar V_\sigma$ by selecting the
columns of $\bar V$ whose indices are in $\sigma$.  Now the map
\begin{equation}
  \label{eq:tau_v}
  \tau_V \,:\, \binom{[n]}{k}\to\RR \ , \quad \sigma \mapsto \tdet(\bar V_\sigma)
\end{equation}
is a finite tropical Pl\"ucker vector.  Here
\begin{equation}\label{eq:tdet}
  \tdet(A) \ = \ \min_{\omega\in\Sym(k)} a_{1,\omega(1)}+a_{2,\omega(2)}+\dots+a_{k,\omega(k)}
\end{equation}
denotes the \emph{tropical determinant} of the matrix $A=(a_{ij})_{i,j}\in\RR^{k\times k}$, and
$\Sym(k)$ is the symmetric group naturally acting on the set $[k]$.  We obtain a map $\tau$ which
sends the $k{\times}(n{-}k)$-matrix~$V$ to the vector $\tau_V$ of length $\tbinom{n}{k}$.
Conversely, for $\pi\in\RR^{\tbinom{[n]}{k}}$ we define a $k{\times}(n{-}k)$-matrix
$\Phi(\pi)=(\phi_{ij})_{i,j}$ by letting
\[
\phi_{ij} \ = \ \pi(([k]\setminus\{i\})\cup\{j+k\}) \, .
\]
That is, the coefficients of $\Phi(\pi)$ are the tropical determinants of those
$k{\times}k$-submatrices of $\bar V$ which are formed by the first $k$ columns, except for the
$i$-th which is replaced by the $(j{+}k)$-th column of $\bar V$, and which is the same as the $j$-th
column of $V$.

\begin{lemma}\label{lem:Phi-tau}
  For an arbitrary matrix $V\in\RR^{k\times (n-k)}$ we have $\Phi(\tau_V) = V$.  In particular, the
  map $\tau$ is injective, and the map $\Phi$ is surjective onto $\RR^{k\times (n-k)}$.  Moreover, $\Phi$ is linear and $\tau$ is piecewise-linear.
\end{lemma}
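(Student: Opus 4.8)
The plan is to reduce the matrix identity $\Phi(\tau_V)=V$ to a single, highly structured tropical determinant computation, and then to read off the four consequences formally. First I would fix indices $i\in[k]$ and $j\in[n-k]$ and unwind the definitions: by definition of $\Phi$, the $(i,j)$-entry of $\Phi(\tau_V)$ equals $\tau_V(\sigma)=\tdet(\bar V_\sigma)$ for the $k$-set $\sigma=([k]\setminus\{i\})\cup\{j+k\}$. I would then describe $\bar V_\sigma$ explicitly: it consists of the columns of the tropical identity matrix $E_k$ indexed by $[k]\setminus\{i\}$ together with column $k+j$ of $\bar V$, which is precisely the $j$-th column of $V$. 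Thus $\bar V_\sigma$ is a $k\times k$ matrix in which, for each $\ell\neq i$, one column carries the value $0$ in row $\ell$ and $\infty$ in every other row, while the remaining column carries the finite entries $V_{1,j},\dots,V_{k,j}$.

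The key step is to evaluate $\tdet(\bar V_\sigma)=\min_{\omega\in\Sym(k)}\sum_\ell (\bar V_\sigma)_{\ell,\omega(\ell)}$. Here I would argue that the only transversal contributing a finite value is forced: each tropical standard-basis column indexed by some $\ell\neq i$ has its unique finite entry in row $\ell$, so to avoid an $\infty$ summand any admissible $\omega$ must match that column to row $\ell$. This leaves exactly row $i$ to be matched with the column coming from $V$, which contributes $V_{i,j}$, while every standard column contributes $0$. Any other permutation picks up at least one $\infty$ entry, so the minimum is attained at this unique finite transversal and equals $V_{i,j}$. Since $i$ and $j$ were arbitrary, this establishes $\Phi(\tau_V)=V$.

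Finally I would deduce the remaining assertions, all of which follow formally. Injectivity of $\tau$ is immediate, since $\tau_V=\tau_{V'}$ forces $V=\Phi(\tau_V)=\Phi(\tau_{V'})=V'$; surjectivity of $\Phi$ onto $\RR^{k\times(n-k)}$ holds because every $V$ lies in the image via $V=\Phi(\tau_V)$. Linearity of $\Phi$ is clear from its definition, as each entry $\phi_{ij}$ is just the coordinate of $\pi$ indexed by $([k]\setminus\{i\})\cup\{j+k\}$, making $\Phi$ a coordinate projection. For piecewise-linearity of $\tau$, each coordinate $\tau_V(\sigma)=\tdet(\bar V_\sigma)$ is a minimum of finitely many affine-linear functions of the entries of $V$ --- namely those transversals avoiding $\infty$ --- and hence piecewise-linear in $V$, so passing to the common refinement of the corresponding subdivisions shows $\tau$ is piecewise-linear. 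The only point requiring genuine care is the bookkeeping of the $\infty$ entries of $E_k$, which is exactly what pins down the unique finite transversal; the rest is formal.
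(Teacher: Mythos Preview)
Your proof is correct and follows essentially the same approach as the paper: both arguments identify the submatrix $\bar V_\sigma$ for $\sigma=([k]\setminus\{i\})\cup\{j+k\}$, observe that the tropical-identity columns force all but one row assignment and thereby pin down a unique finite transversal with value $v_{ij}$, and then read off the remaining claims from the fact that $\Phi$ is a coordinate projection and the tropical determinant is a minimum of linear forms. Your write-up is somewhat more explicit about the combinatorics of the forced transversal and about deducing injectivity and surjectivity, but the underlying argument is the same.
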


\begin{proof}
  For $\sigma=([k]\setminus\{i\})\cup\{j+k\}$ the matrix $\bar V_\sigma$ has precisely one column
  with finite entries only, namely the last one, corresponding to column $j$ of the matrix $V$.
  Each of the first $k-1$ columns of $\bar V_\sigma$ has precisely one finite entry, which equals
  zero in all cases.  Among the first $k-1$ columns the only row with only $\infty$ coefficients is
  the $i$-th one.  Hence the tropical determinant of~$\bar V_\sigma$ equals the coefficient $v_{ij}$
  of the matrix $V$.  This is precisely the first claim.

  The tropical determinant is a piecewise-linear map; see \eqref{eq:tdet}.  Hence $\tau$ is piecewise-linear, too.  The map $\Phi$ is a linear projection.
\end{proof}

In general, the map $\tau$ is not surjective; for details see Example~\ref{exmp:k=2} below.

\begin{remark}
  The map $\tau$ is not linear: For instance, consider $k=2$, $n=4$,
  \[
  V \ = \ \begin{pmatrix} 1 & 0 \\ 0 & 1 \end{pmatrix} \, ,
  \quad \text{and} \quad
  W \ = \ \begin{pmatrix} 0 & 1 \\ 1 & 0 \end{pmatrix} \, .
  \]
  Then $\tau_V=(0,1,0,0,1,0)$, $\tau_W=(0,0,1,1,0,0)$, and
  \[
  \tau_{V+W} \ = \ (0,1,1,1,1,2) \ \ne \ (0,1,1,1,1,0) \ = \ \tau_V+\tau_W \, ,
  \]
  where the coordinates of $\RR^{\tbinom{4}{2}}$ are (lexicographically) labeled $12$, $13$, $14$,
  $23$, $24$, $34$.
\end{remark}

The map $\tau_V$ can be read as a height function on the hypersimplex
\[
\Hypersimplex{k}{n} \ := \ \conv\SetOf{e_\sigma:=\sum_{i\in \sigma} e_i\in \RR^n}{\sigma\in\binom{[n]}{k}}
\]
via mapping the vertex $e_\sigma$ to $\tau_V(\sigma)$.  Similarly, the matrix $V$ has a natural
interpretation as a height function on the vertices of $\Delta_{k-1}\times\Delta_{n-k-1}$.

A \emph{subpolytope} of a polytope with vertex set $X$ is the convex hull of a subset of $X$. A
subpolytope of $\Delta(k,n)$ whose edges are parallel to edges of $\Delta(k,n)$, that is,
differences of standard basis vectors $e_i-e_j$, is a \emph{$(k,n)$-matroid polytope}.  By a result
of Gel{\cprime}fand, Goresky, MacPherson and Serganova~\cite[Thm.~4.1]{GelfandEtAl87} the 
vertices of a $(k,n)$-matroid
polytope correspond to the bases of a matroid of rank at most $k$ on $n$ elements; hence the name
\emph{matroid polytope}.  A \emph{$(k,n)$-matroid subdivision} is a polytopal subdivision (of
$\Delta(k,n)$) such that each cell is a $(k,n)$-matroid polytope.  A \emph{regular} (matroid)
subdivision is induced by a height function; see \cite[Sec.~2.2.3]{Triangulations} for details about
regular subdivisions.  The following characterization is crucial.

\begin{proposition}[{Kapranov~\cite{MR1237834} and \cite[Prop.~2.2]{TLS}}]
  \label{prop:matroid_decomposition}
  Let $\Sigma$ be a regular subdivision of $\Delta(k,n)$ induced by the lifting function $\pi$.  Then
  the following are equivalent.
  \begin{enumerate}
  \item $\pi$ is a finite tropical Pl\"ucker vector,
  \item $\Sigma$ is a matroid subdivision,
  \item \label{it:matroid_decomposition:edges} the $1$-cells of $\Sigma$ are precisely the edges of
    $\Delta(k,n)$.
  \end{enumerate}
\end{proposition}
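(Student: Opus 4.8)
The plan is to route the three equivalences through the local structure of $\Delta(k,n)$ on its octahedral $2$-faces. For each $(k{-}2)$-subset $\rho$ and distinct $i,j,\ell,m\in[n]\setminus\rho$ the six vertices $e_{\rho S}$ with $S\in\binom{\{i,j,\ell,m\}}{2}$ span a face $O$ of $\Delta(k,n)$ isomorphic to the octahedron $\Delta(2,4)$, and the three pairs of antipodal vertices of $O$ are exactly the three pairs whose $\pi$-values occur in \eqref{eq:3term}. Since restricting a regular subdivision to a face yields the regular subdivision induced by the restricted heights, I would first analyse $O$ in isolation: the common midpoint $z$ of the three long diagonals is lifted, along each diagonal, to the heights $\tfrac12(\pi(\rho ij)+\pi(\rho\ell m))$, $\tfrac12(\pi(\rho i\ell)+\pi(\rho jm))$ and $\tfrac12(\pi(\rho im)+\pi(\rho j\ell))$. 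A short computation then shows that the lower hull of the six lifted points contains one of the three (non-matroidal) long diagonals of $O$ if and only if the corresponding height is the \emph{strict} minimum; equivalently, the minimum in \eqref{eq:3term} is attained at least twice precisely when $\Sigma|_O$ contains no long diagonal. This is the backbone: $\pi$ is a tropical Pl\"ucker vector if and only if $\Sigma$ induces no long diagonal on any octahedral face.

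Next I would dispose of the easy equivalence (ii)$\Leftrightarrow$(iii). Because a regular subdivision of $\Delta(k,n)$ introduces no new vertices, every edge of $\Delta(k,n)$ is already a $1$-cell of $\Sigma$, so (iii) asserts exactly that $\Sigma$ has no \emph{further} $1$-cells. Since faces of cells are again cells, every edge of every cell is a $1$-cell of $\Sigma$; thus (iii) is equivalent to saying that each such edge has direction $e_a-e_b$, which is precisely the defining property of a matroid polytope, giving (ii). The only remaining ingredient is the standard fact that a segment between two vertices $e_\sigma,e_\tau$ of $\Delta(k,n)$ has direction parallel to some $e_a-e_b$ if and only if $\sigma$ and $\tau$ differ by a single swap, i.e.\ if and only if $[e_\sigma,e_\tau]$ is an edge of $\Delta(k,n)$; this uses only that the endpoints are $0/1$-vectors.

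For (iii)$\Rightarrow$(i) I would restrict to an octahedral face $O$: a long diagonal of $O$ joins two vertices differing by a double swap, hence is never an edge of $\Delta(k,n)$, so under (iii) the subdivision $\Sigma|_O$ has no long diagonal and the backbone yields (i). The substantial direction is (i)$\Rightarrow$(ii). Here I would use that each cell $C$ of $\Sigma$ is the set of $k$-sets minimizing a height function $\pi'(\sigma)=\pi(\sigma)-\sum_{a\in\sigma}w_a$, where $w$ lies in the relative interior of the normal cone of $C$. Inspecting \eqref{eq:3term} shows that under $\pi\mapsto\pi'$ all three terms shift by the same amount, so $\pi'$ is again a finite tropical Pl\"ucker vector. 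It therefore suffices to prove that the collection $\cB$ of minimizers of a finite tropical Pl\"ucker vector is the set of bases of a matroid; the theorem of Gel{\cprime}fand, Goresky, MacPherson and Serganova \cite{GelfandEtAl87} then identifies $C$ as a matroid polytope, which is (ii). To establish the matroid property of $\cB$ I would verify basis exchange: given $\sigma,\tau\in\cB$ and $i\in\sigma\setminus\tau$, produce $\ell\in\tau\setminus\sigma$ with $(\sigma\setminus\{i\})\cup\{\ell\}\in\cB$, deduced from \eqref{eq:3term} applied to suitable quadruples.

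The main obstacle is exactly this last step. The octahedral analysis controls only double swaps, so a single three-term relation excludes a bad $1$-cell $[e_\sigma,e_\tau]$ with $\abs{\sigma\setminus\tau}=2$, but for $\abs{\sigma\setminus\tau}\ge 3$ all three sums in \eqref{eq:3term} may strictly exceed the common minimum of $\pi'$ on $\cB$, and no single relation suffices. One therefore needs a genuine local-to-global argument, equivalent to the statement that the three-term Pl\"ucker relations already imply the full symmetric-exchange property of the minimizer set (the valuated-matroid viewpoint of Dress and Wenzel). I expect to handle this by an exchange/induction argument that reduces a hypothetical long diagonal to a double swap through repeated application of \eqref{eq:3term}; this is where the real work lies, and it is precisely the content attributed to Kapranov \cite{MR1237834} and to \cite{TLS}.
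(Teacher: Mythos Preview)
The paper does not supply its own proof of this proposition: it is stated with attribution to Kapranov~\cite{MR1237834} and to \cite[Prop.~2.2]{TLS} and then used as a black box. So there is no in-paper argument to compare your proposal against.

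Assessed on its own, your outline is sound and is in fact the standard route through the result. Two small corrections. First, the octahedra $\Delta(2,4)$ are $3$-dimensional, hence $3$-faces of $\Delta(k,n)$, not $2$-faces (the paper says this explicitly in Section~\ref{sec:computation}). Second, in your (ii)$\Leftrightarrow$(iii) paragraph you should be a bit more careful: a regular subdivision can certainly drop edges of $\Delta(k,n)$ (whenever a lifted vertex sits strictly above the lower hull), so it is not automatic that ``every edge of $\Delta(k,n)$ is already a $1$-cell of $\Sigma$''. What you actually need is that the $1$-cells of $\Sigma$ are \emph{among} the edges of $\Delta(k,n)$; the missing edges are harmless because a subpolytope with only root directions as edges is still a matroid polytope. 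With that phrasing the equivalence goes through.

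You are right that (i)$\Rightarrow$(ii) is where the content lies, and your reduction to showing that the minimizer set of a tropical Pl\"ucker vector satisfies basis exchange is exactly the valuated-matroid argument of Dress--Wenzel that underlies the cited references. Your proposal correctly flags this as the nontrivial step rather than claiming to have dispatched it; that is the honest status of your sketch.
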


For $\sigma,\sigma'\in\tbinom{[n]}{k}$ the vertices $e_\sigma$ and $e_{\sigma'}$ are neighbors in
the vertex-edge graph of $\Delta(k,n)$ if and only if the symmetric difference of $\sigma$ and
$\sigma'$ consists of two elements.  This means that the neighbors of $e_\sigma$ are contained in
the set
\[
e_\sigma^\perp \ := \ \SetOf{x \in \RR^n}{\sum_{i=1}^n x_i=k \text{ and } \sum_{i\in\sigma} x_i=k-1}\,,
\]
which forms a hyperplane in the affine span of the hypersimplex.  The vertex figure of each vertex
in $\Delta(k,n)$ is isomorphic to the product of simplices $\Delta_{k-1}\times\Delta_{n-k-1}$.  The map
\[
\iota \,:\, \SetOf{e_i+e_j}{i\in[k],j\in[n]\setminus [k]} \to e_{[k]}^\perp \,, \ e_i+e_j\mapsto e_{[k]\setminus\{i\}\cup\{j\}}
\]
describes a bijection from the vertices of $\Delta_{k-1}\times\Delta_{n-k-1}$ to the neighbors of
the vertex $e_{[k]}$ of $\Delta(k,n)$. This naturally induces a map form the set of all subdivisions of
$\Delta_{k-1}\times\Delta_{n-k-1}$ to the set of all subdivisions of the vertex figure $\Hypersimplex kn \cap
e_{[k]}^\perp$, which we also denote by $\iota$. For any not necessarily regular subdivision $\Sigma$ of
$\Delta(k,n)$ we let $\Sigma\cap e_{[k]}^\perp$ denote the polytopal complex arising from intersecting
each cell of $\Sigma$ with the affine subspace $e_{[k]}^\perp$.

The following is the content of Corollary 1.4.14 in \cite{MR1237834}; here we give an elementary proof.

\begin{proposition}\label{prop:vertex_figure}
  Let $V\in\RR^{k\times (n-k)}$ be a matrix, $\Gamma$ the regular subdivision of
  $\Delta_{k-1}\times\Delta_{n-k-1}$ induced by $V$, and $\Sigma$ the regular matroid subdivision of
  $\Delta(k,n)$ induced by $\tau_V$.

  Then the polytopal complex $\Sigma\cap e_{[k]}^\perp$ coincides with $\iota(\Gamma)$.
\end{proposition}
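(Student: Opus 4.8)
The plan is to realize both subdivisions as regular subdivisions of the \emph{same} polytope, namely the vertex figure $Q := \Hypersimplex{k}{n}\cap e_{[k]}^\perp = \conv\{w_{ij}\}$ where $w_{ij}:=\iota(e_i+e_{j+k})=e_{[k]\setminus\{i\}\cup\{j+k\}}$, and to match their maximal cells by comparing supporting linear functionals. The first observation, which drives everything, is that by Lemma~\ref{lem:Phi-tau} the restriction of $\tau_V$ to the neighbours of $e_{[k]}$ reproduces $V$, that is $\tau_V(w_{ij})=v_{ij}$. Since $\iota$ is the restriction of the affine isomorphism $x\mapsto e_{[k]}+Lx$ of $\RR^n$ with $L=\mathrm{diag}(-1,\dots,-1,1,\dots,1)$, it carries the $V$-lifted product of simplices onto the $\tau_V$-lifted $Q$; hence $\iota(\Gamma)$ is exactly the regular subdivision of $Q$ induced by the restriction of $\tau_V$ to the neighbours. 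Second, $e_{[k]}^\perp$ is the hyperplane through all neighbours of $e_{[k]}$, with $e_{[k]}$ strictly on one side and every non-neighbour vertex strictly on the other; so for any cell $C$ of $\Sigma$ the vertices of the slice $C\cap e_{[k]}^\perp$ are only the neighbours of $e_{[k]}$ lying in $C$. Here I would invoke Proposition~\ref{prop:matroid_decomposition}(\ref{it:matroid_decomposition:edges}): every edge of $C$ is an edge of $\Hypersimplex{k}{n}$, so no edge runs from $e_{[k]}$ to a strictly lower vertex and the slice acquires no new edge-crossing vertices, whence $C\cap e_{[k]}^\perp=\conv\{w_{ij}\in C\}$.

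Next I translate membership into a statement about slopes. For $\psi\in\RR^n$ set $f_\psi(\sigma):=\tau_V(\sigma)+\scp{\psi}{e_\sigma}$; the cell of $\Sigma$ selected by a generic $\psi$ consists of the $\sigma$ minimising $f_\psi$, and the cell of $\iota(\Gamma)$ selected by $\psi$ consists of the $w_{ij}$ minimising $g_\psi(i,j):=v_{ij}-\psi_i+\psi_{j+k}$, which is the restriction of $f_\psi$ to neighbours up to the constant $\scp{\psi}{e_{[k]}}$. Using $\tau_V([k])=\tdet(E_k)=0$, a one-line computation gives $f_\psi(\sigma_{ij})-f_\psi([k])=g_\psi(i,j)$ for a neighbour $\sigma_{ij}$. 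For the inclusion $\Sigma\cap e_{[k]}^\perp\subseteq\iota(\Gamma)$ I take a maximal cell $C\ni e_{[k]}$ selected by $\psi$; then $f_\psi([k])$ is the global minimum, $g_\psi\ge0$ on all neighbours, and some neighbour lies in $C$ (an edge at the vertex $e_{[k]}$), so $\min g_\psi=0$ and $C\cap e_{[k]}^\perp=\conv\{w_{ij}:g_\psi(i,j)=0\}$ is precisely the cell of $\iota(\Gamma)$ selected by the same $\psi$.

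The reverse inclusion is the crux and the main obstacle: from a cell $G$ of $\iota(\Gamma)$ I must produce a cell of $\Sigma$ containing $e_{[k]}$ whose slice is $G$, that is, I must show that a slope which is optimal \emph{merely among the neighbours} is in fact globally optimal over all of $\binom{[n]}{k}$. Choose $\psi$ selecting $G$ and normalise by the admissible shift $\psi_i\mapsto\psi_i+\min g_\psi$ on $i\in[k]$, which leaves the selected neighbours unchanged, to arrange $v_{ij}\ge\psi_i-\psi_{j+k}$ for all $i,j$. I then need $f_\psi(\sigma)\ge f_\psi([k])$ for every $\sigma$, which by the computation above amounts to $\tau_V(\sigma)\ge\sum_{r\in[k]\setminus R}\psi_r-\sum_{j\in J}\psi_{j+k}$, where $R=\sigma\cap[k]$ and $J=\{j:j+k\in\sigma\}$. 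The key device is the tropical-minor expansion of $\tau_V$: since the unit columns of $\bar V_\sigma$ force the diagonal entries of any finite tropical transversal, $\tau_V(\sigma)=\min_\beta\sum_{r\in[k]\setminus R}v_{r,\beta(r)}$ over bijections $\beta\colon[k]\setminus R\to J$. For each such $\beta$, summing the entrywise bounds $v_{r,\beta(r)}\ge\psi_r-\psi_{\beta(r)+k}$ and using that $\beta$ is a bijection onto $J$ yields $\sum_r v_{r,\beta(r)}\ge\sum_{r\in[k]\setminus R}\psi_r-\sum_{j\in J}\psi_{j+k}$; taking the minimum over $\beta$ gives the required inequality. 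Hence $e_{[k]}$ is a global minimiser of $f_\psi$, the selected cell $C$ of $\Sigma$ contains $e_{[k]}$, and $C\cap e_{[k]}^\perp=G$.

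Finally I would assemble the pieces: the two previous paragraphs set up a bijection between the maximal (full-dimensional) cells of $\iota(\Gamma)$ and the slices of the maximal cells of $\Sigma$ containing $e_{[k]}$, realised by the \emph{same} slopes; since $\Sigma\cap e_{[k]}^\perp$ and $\iota(\Gamma)$ are both polytopal subdivisions of $Q$ and a subdivision is determined by its maximal cells, they coincide. I expect the only genuinely delicate point to be the global-optimality step, and in particular the bookkeeping in the tropical-minor expansion; the geometric reduction to $Q$ and the slope dictionary are routine once Lemma~\ref{lem:Phi-tau} is in hand.
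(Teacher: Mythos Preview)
Your argument is correct, and in fact more complete than the paper's own short proof of this proposition. The paper argues that, since $\Sigma$ is a matroid subdivision, $\Sigma\cap e_{[k]}^\perp$ is a subdivision of the vertex figure without new vertices, and then asserts that the claim follows from Lemma~\ref{lem:Phi-tau}. This establishes that each cell of $\Sigma\cap e_{[k]}^\perp$ is contained in a cell of $\iota(\Gamma)$ (your first inclusion), but it leaves implicit precisely the point you flag as the crux: why a linear functional that is optimal among the neighbours remains optimal over all of $\tbinom{[n]}{k}$. Your normalisation together with the tropical-minor expansion $\tau_V(\sigma)=\min_\beta\sum_{r\in[k]\setminus R}v_{r,\beta(r)}$ supplies exactly this missing inequality.

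The paper does give a second, complete proof via Theorem~\ref{thm:tight-span}: there the subdivision is constructed by coning, and the identification of the lifting function with $\tau_V$ is obtained from the integrality of optimal solutions to a minimum-cost-flow problem on the complete bipartite graph on $([k]\setminus R)\sqcup J$. Your argument is the linear-programming dual of that step: instead of showing that the optimal fractional flow is integral (hence equals the tropical determinant), you bound each transversal in the tropical minor from below by the dual potentials $\psi$. This is more elementary---no appeal to network-flow theory---and has the advantage of working directly with the secondary-fan description via selecting functionals. The cost is that the bijection between maximal cells is established slope by slope rather than once and for all via the explicit coning construction~\eqref{eq:Sigma}, so the argument is slightly longer; but nothing is lost, and the ``global optimality from local optimality'' step you isolate is genuinely the heart of the matter.
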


\begin{proof}
  The neighbors of the vertex $e_{[k]}$ lie in the common hyperplane $e_{[k]}^\perp$, and therefore
  the set $\Delta(k,n)\cap e_{[k]}^\perp$ serves as a model for the vertex figure of $e_{[k]}$.
  Since $\Sigma$ is a matroid subdivision,
  Proposition~\ref{prop:matroid_decomposition}~\eqref{it:matroid_decomposition:edges} implies that
  each edge of $\Sigma$ intersects the hyperplane $e_{[k]}^\perp$ in a vertex. This says that
  $\Sigma\cap e_{[k]}$ is a (regular) subdivision of the vertex figure $\Hypersimplex kn \cap
  e_{[k]}^\perp$ (without any new vertices).  This way the claim follows
  from Lemma~\ref{lem:Phi-tau}.
\end{proof}

The inclusion relation among the cells turns a pure polytopal complex $\Sigma$ of dimension $m$ into
a partially ordered set.  The dimension of a cell serves as a rank function on the poset: The poset
elements of rank $\ell+1$ are the $\ell$-dimensional cells of $\Sigma$.  The \emph{tight-span}
$\Atightspan{\Sigma}$ is the partially ordered set obtained by restricting the previously mentioned
partial order to interior cells, and dualizing it.  So the elements of the tight-span rank $\ell+1$
are the interior cells of dimension $m-\ell$, and the partial ordering is given by reverse
inclusion.  If $\Sigma$ is a polytopal subdivision of a polytope, the tight-span is isomorphic to
the containment poset of a contractible cell complex.

If, additionally, this subdivision is regular, then this cell complex can be realized as a polytopal
complex. Obviously, if $\Sigma$ is a subdivision of a polytope $P$ and $\Sigma'$ a refinement of
$\Sigma$, then $\Atightspan\Sigma$ is a subcomplex of $\Atightspan{\Sigma'}$. Furthermore, if $P'$
is a subpolytope of $P$, the tight-span of the subdivision $\Sigma|_{P'}:=\smallSetOf{S\cap
  P'}{S\in\Sigma}$ is a subcomplex of $T(\Sigma)$.  In general, $\Sigma|_{P'}$ may have vertices
which do not occur in $\Sigma$; see Figure~\ref{fig:subdivisions} for an example.

\begin{proposition}\label{prop:ts-isom}
  Let $P$ be a polytope, $P'$ a subpolytope of $P$ and $\Sigma$ a subdivision of $P$ such that
  $T(\Sigma)$ and $T(\Sigma|_{P'})$ are isomorphic.
  \begin{enumerate}
  \item\label{prop:ts-isom:isom} If $\Sigma'$ is a subdivision of $P$ coarsening $\Sigma$, then $T(\Sigma')$ and 
    $T(\Sigma'|_{P'})$ are isomorphic.
  \item\label{prop:ts-isom:coarsest} If $\Sigma|_{P'}$ is a coarsest subdivision of $P'$, then $\Sigma$
    is a coarsest subdivision of $P$.
  \end{enumerate}
\end{proposition}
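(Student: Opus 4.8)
My plan is to work entirely inside the single tight-span $T(\Sigma)$, exploiting that all four tight-spans in the statement are naturally subcomplexes of it. First I would record the relevant inclusions, all taken from the remarks preceding the proposition. Since $\Sigma$ refines $\Sigma'$, the coarser subdivision has the smaller tight-span, so $T(\Sigma')$ is a subcomplex of $T(\Sigma)$; likewise $T(\Sigma|_{P'})$ is a subcomplex of $T(\Sigma)$ because $P'$ is a subpolytope of $P$. Moreover $T(\Sigma'|_{P'})$ is a subcomplex of $T(\Sigma')$ (it is the restriction of $\Sigma'$ to $P'$) and also of $T(\Sigma|_{P'})$; the latter uses the routine observation that $\Sigma'|_{P'}$ coarsens $\Sigma|_{P'}$, so that restriction and coarsening are mutually compatible. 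The hypothesis $T(\Sigma)\cong T(\Sigma|_{P'})$, together with the fact that $T(\Sigma|_{P'})$ is a subcomplex of $T(\Sigma)$ with the same finite number of cells in each dimension, then forces the equality $T(\Sigma|_{P'})=T(\Sigma)$ of subcomplexes.

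The heart of the argument is the commutation identity
\[
T(\Sigma'|_{P'}) \;=\; T(\Sigma')\cap T(\Sigma|_{P'})
\]
of subcomplexes of $T(\Sigma)$, which I expect to hold for any coarsening $\Sigma'$ of $\Sigma$, independently of the hypothesis. The inclusion $\subseteq$ is immediate from the two subcomplex relations for $T(\Sigma'|_{P'})$ recorded above. The reverse inclusion is the main obstacle: one must show that an interior cell of $\Sigma$ which both survives the coarsening to $\Sigma'$ (so lies in $T(\Sigma')$) and is supported over $P'$ (so lies in $T(\Sigma|_{P'})$) already arises as an interior cell of the restricted subdivision $\Sigma'|_{P'}$. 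I would prove this by unwinding the polytopal realization of the tight-span of a regular subdivision: a cell of $T(\Sigma)$ lying over $P'$ corresponds to a face of $\Sigma$ meeting the relative interior of $P'$, and surviving to $\Sigma'$ means this face is a union of cells of $\Sigma$ that is again a cell of $\Sigma'$; intersecting with $P'$ produces the corresponding cell of $\Sigma'|_{P'}$, and one checks that dimensions and interiority are preserved so that no cell is lost. Granting the identity, part~(i) is a one-line substitution: for any coarsening $\Sigma'$ of $\Sigma$,
\[
T(\Sigma'|_{P'}) \;=\; T(\Sigma')\cap T(\Sigma|_{P'}) \;=\; T(\Sigma')\cap T(\Sigma) \;=\; T(\Sigma') \, ,
\]
where the middle step uses $T(\Sigma|_{P'})=T(\Sigma)$ and the last uses $T(\Sigma')\subseteq T(\Sigma)$. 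In particular $T(\Sigma')$ and $T(\Sigma'|_{P'})$ are isomorphic.

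For part~(ii) I would show that $\Sigma$ admits no proper nontrivial coarsening. First, $\Sigma$ is itself nontrivial: if $\Sigma=\{P\}$, then $\Sigma|_{P'}=\{P'\}$ is trivial, contradicting that $\Sigma|_{P'}$ is coarsest. Now let $\Sigma'$ be any coarsening of $\Sigma$. Then $\Sigma'|_{P'}$ coarsens $\Sigma|_{P'}$, and since $\Sigma|_{P'}$ is coarsest we have either $\Sigma'|_{P'}=\{P'\}$ or $\Sigma'|_{P'}=\Sigma|_{P'}$. In the first case $T(\Sigma'|_{P'})$ is a single point, so by part~(i) $T(\Sigma')$ is a single point as well, forcing $\Sigma'=\{P\}$ to be trivial. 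In the second case part~(i) gives $T(\Sigma')\cong T(\Sigma'|_{P'})=T(\Sigma|_{P'})=T(\Sigma)$; as $T(\Sigma')$ is a subcomplex of $T(\Sigma)$ isomorphic to it, we get $T(\Sigma')=T(\Sigma)$, and since $\Sigma'$ coarsens $\Sigma$ with the same maximal cells, these two subdivisions coincide. Hence every coarsening of $\Sigma$ is either trivial or equal to $\Sigma$, which is precisely the assertion that $\Sigma$ is a coarsest subdivision of $P$.
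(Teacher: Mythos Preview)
Your argument for (i) rests on treating $T(\Sigma')$ as a literal subcomplex of $T(\Sigma)$ whenever $\Sigma'$ coarsens $\Sigma$, and likewise $T(\Sigma'|_{P'})\subseteq T(\Sigma|_{P'})$. Although the remark before the proposition asserts this, it is not true as a relation between posets: interior cells of the coarser subdivision are unions of cells of the finer one and are typically not themselves cells of it. For instance, with $P=[0,3]$, $\Sigma$ the subdivision into $[0,1],[1,2],[2,3]$, and $\Sigma'$ the coarsening into $[0,2],[2,3]$, the maximal cell $[0,2]\in\Sigma'$ is not a cell of $\Sigma$ at all, so there is no inclusion of posets $T(\Sigma')\hookrightarrow T(\Sigma)$. (The correct relation is that $T(\Sigma')$ is a \emph{quotient} of $T(\Sigma)$, obtained by collapsing certain faces.) Your commutation identity $T(\Sigma'|_{P'})=T(\Sigma')\cap T(\Sigma|_{P'})$ therefore has no well-defined meaning as an intersection inside $T(\Sigma)$, and the one-line derivation of (i) from it does not go through. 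Your sketch of the reverse inclusion also appeals to the polytopal realization of a \emph{regular} subdivision, which the proposition does not assume.

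The paper sidesteps all of this by arguing directly with maximal cells: if $T(\Sigma')\not\cong T(\Sigma'|_{P'})$ then some maximal cell $F'\in\Sigma'$ has $F'\cap P'$ not full-dimensional; but any maximal cell $F\subseteq F'$ of $\Sigma$ has $F\cap P'$ full-dimensional by the hypothesis on $\Sigma$, and $F\cap P'\subseteq F'\cap P'$ gives the contradiction. Your case analysis for (ii), on the other hand, is more thorough than the paper's and can be made to stand without the subcomplex claim: in the case $\Sigma'|_{P'}=\Sigma|_{P'}$ you get $T(\Sigma')\cong T(\Sigma)$ from (i), and then $\Sigma'=\Sigma$ follows simply because a coarsening with the same number of maximal cells must coincide with the original subdivision.
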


\begin{figure}
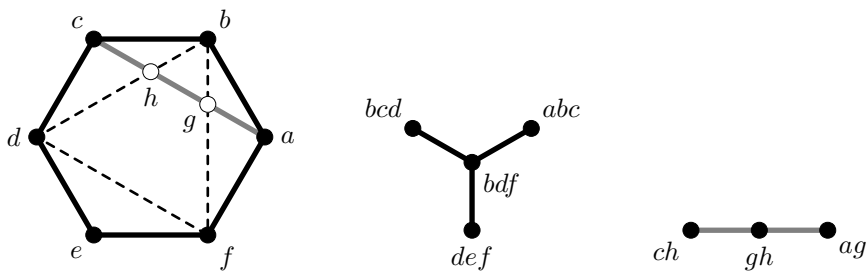

  \includegraphics[scale=1]{subdivisions1.mps} \qquad
  \includegraphics[scale=1]{subdivisions2.mps} \qquad
  \includegraphics[scale=1]{subdivisions3.mps}
  \caption{Subdivision $\Sigma$ of the hexagon $P=abcbdef$ (left).  The induced subdivision
    $\Sigma|_{P'}$ of the subpolytope $P'=ac$ has two new vertices, labeled $g$ and $h$.  Tight span
    of $\Sigma$ (center) and tight span of $\Sigma|_{P'}$ (right)}
  \label{fig:subdivisions}
\end{figure}

\begin{proof}
  We first prove \eqref{prop:ts-isom:isom}.  Suppose $T(\Sigma')$ and $T(\Sigma'|_{P'})$ are not
  isomorphic, that is, there exists a full-dimensional cell $F'\in\Sigma'$ such that $F'\cap P'$ is
  not full-dimensional. However, since $T(\Sigma)$ and $T(\Sigma|_{P'})$ are isomorphic, for
  a full"=dimensional cell $F\subset F'$ of $\Sigma$, we have that $F\cap P'\subset F'\cap P'$ is
  full"=dimensional, a contradiction.

  Now suppose that there exists a non"=trivial coarsening $\Sigma'$ of
  $\Sigma$. By~\eqref{prop:ts-isom:isom}, $T(\Sigma')$ and $T(\Sigma'|_{P'})$ are isomorphic. This
  shows that $\Sigma'|_{P'}$ is a non"=trivial coarsening of $\Sigma|_{P'}$, and this establishes
  \eqref{prop:ts-isom:coarsest}.
\end{proof}

The following general uniqueness result applies to our main result below.

\begin{proposition}\label{prop:unique}
  Let $P$ be a polytope and $v$ a vertex of $P$. Further, let $N$ be the set of vertices neighboring
  $v$, and let $Q = \conv N$.  Suppose that $\dim Q = \dim P - 1$.  Then for each subdivision
  $\Gamma$ of $Q$ there is at most one subdivision $\Sigma$ of $P$ such that $\Sigma\cap Q =
  \Gamma$ and every maximal cell of $\Sigma$ contains $v$.
\end{proposition}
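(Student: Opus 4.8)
The plan is to show that, under the hypotheses, every maximal cell $F$ of such a $\Sigma$ is forced to equal the intersection of $P$ with the cone from $v$ over a single maximal cell of $\Gamma$. Since this description of the maximal cells depends only on $\Gamma$ (and on $P$ and $v$), and a polytopal subdivision is determined by its maximal cells, uniqueness follows at once. Throughout I write $d=\dim P$ and $H=\aff Q$, and for a subset $S\subseteq H$ I denote by $K(S)=\{\,v+t(q-v) : q\in S,\ t\ge 0\,\}$ the cone with apex $v$ over $S$.

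First I would set up the radial geometry. Because the edges of $P$ emanating from $v$ run to its neighbors $N\subseteq Q$, the tangent cone of $P$ at $v$ is exactly $K(Q)$, and since a polytope is contained in its tangent cone at any vertex, $P\subseteq K(Q)$. The hypothesis $\dim Q=\dim P-1$ guarantees that $H$ is a genuine hyperplane and that $v\notin H$: otherwise the edge directions at $v$ would lie in $H$ and span at most $d-1$ dimensions, contradicting $\dim P=d$. Evaluating an affine functional that vanishes on $H$ along a ray from $v$ then gives $K(Q)\cap H=Q$. Consequently the radial projection $r$ from $v$, sending $x\neq v$ to the unique point where the ray from $v$ through $x$ meets $H$, is a well-defined continuous map $P\setminus\{v\}\to Q$ which fixes $Q$ pointwise, and $r(x)\in Q\subseteq P$ for every $x$.

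The technical core, and the step I expect to be the main obstacle, is a \emph{saturation lemma}: if $F$ is a maximal cell, $q\in F\cap Q$, and the ray from $v$ through $q$ meets $\interior F$, then the whole segment $P\cap(\text{that ray})$ lies in $F$. This is where the hypothesis that \emph{every} maximal cell contains $v$ is indispensable. I would argue by contradiction: if the ray left $F$ at a boundary point $b$ and passed into an adjacent maximal cell $F'$, then $F'$ also contains $v$, hence contains the whole segment $[v,b]$, and therefore contains a point of $\interior F$ (here I use the convexity fact that a line through $\interior F$ meets $F$ in a chord whose relative interior lies in $\interior F$). But two maximal cells sharing an interior point of one of them must coincide, because in a subdivision they meet in a common face and a face containing an interior point is the whole cell; this contradicts $F'\neq F$.

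With saturation in hand the remainder is bookkeeping. Applying $r$ to $\interior F$ shows $r(\interior F)$ is a full-dimensional open subset of $Q$ contained in $F\cap Q$, so $\dim(F\cap Q)=d-1$ and $G:=F\cap Q$ is a maximal cell of $\Gamma$. Saturation then yields $F=P\cap K(G)$: one inclusion is immediate from $r(F)\subseteq F\cap Q$, and the other follows by saturating over the interior fibers of $G$ and passing to the closure for the boundary fibers. Finally, $F\mapsto F\cap Q$ is a bijection from the maximal cells of $\Sigma$ onto those of $\Gamma$, with injectivity again from the interior-point argument (two cells with the same trace share an interior fiber, forcing equality) and surjectivity from covering a relative-interior point of each maximal cell of $\Gamma$ by a cell of $\Sigma$. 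Thus the maximal cells of any admissible $\Sigma$ are precisely $\{\,P\cap K(G) : G\text{ a maximal cell of }\Gamma\,\}$, a set determined by $\Gamma$ alone, so at most one such $\Sigma$ exists.
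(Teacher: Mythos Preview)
Your argument is correct and reaches the same endpoint as the paper: the maximal cells of any admissible $\Sigma$ are forced to be $P\cap K(\gamma)$ as $\gamma$ ranges over the maximal cells of $\Gamma$, so $\Sigma$ is determined.  The execution of the key step differs.  The paper observes directly that the only vertex of a maximal cell $\sigma$ lying strictly on the $v$-side of $H$ is $v$ itself (because $P$ has no other vertex there), which immediately gives $\sigma\subseteq C(\gamma(\sigma))\cap P$; equality then falls out of a global packing argument---the assignment $\sigma\mapsto\gamma(\sigma)$ is injective and both families cover $P$, so no $\sigma$ can be a proper subset of its $D(\gamma(\sigma))$.  Your saturation lemma replaces both of these steps with a single local statement about adjacent maximal cells, which you invoke once for each inclusion.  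Your route is somewhat longer but has the mild advantage of not using the convention that cells of $\Sigma$ have their vertices among those of $P$: the saturation argument goes through for arbitrary polytopal decompositions, whereas the paper's ``no other vertex on the $v$-side'' step relies on that convention.  Conversely, the paper's vertex-plus-packing argument is shorter and avoids the closure bookkeeping you need when passing from $\relint G$ to $G$.
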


\begin{proof}
  For each cell $\gamma$ of $\Gamma$, let $C(\gamma)$ be the cone from $v$ over $\gamma$. Let
  $D(\gamma) = C(\gamma) \cap P$. Then $P$ is stratified into the $D(\gamma)$, but this may not be a
  polyhedral decomposition because $D(\gamma)$ may have vertices which are not vertices of $P$.  We
  will show that, if there is any subdivision $\Sigma$ with the required properties, then
  $\Sigma=\smallSetOf{D(\gamma)}{\gamma\in\Gamma}$.

  First, let $\sigma$ be any maximal cell of $\Sigma$. Then $\sigma \cap Q$, by dimensionality, is a
  maximal cell of $\Gamma$; call it $\gamma(\sigma)$. Since $\sigma$ contains $v$, we have that
  $\sigma$ contains $\conv (v,\gamma(\sigma))$.  This means that there cannot be a second maximal
  cell $\sigma'\ne\sigma$ with $\gamma(\sigma')=\gamma(\sigma)$, as then they would overlap in the
  full dimensional set $\conv(v, \gamma(\sigma))$.

  Also, $\sigma$ has a vertex at $v$, and has no other vertices lying on the $v$-side of $Q$ (since there
  are no such vertices in $P$).  So $\sigma$ is contained in $C(\gamma(\sigma))$ and is thus contained in
  $C(\gamma(\sigma)) \cap P = D(\gamma(\sigma))$.

  Thus, we have shown that there is an injection from facets of $\Sigma$, to facets of $\Gamma$,
  such that $\sigma \subseteq D(\gamma(\sigma))$. But, since $\Sigma$ is supposed to be a
  decomposition of all of $P$, we must have equality for every $\sigma$. As promised, we have shown
  that $\smallSetOf{D(\gamma)}{\gamma\in\Gamma}$ is the only decomposition with the properties
  required.
\end{proof}

The subsequent result is of key relevance.  Rinc\'on independently proved a similar result for
regular subdivisions~\cite{Rincon}.

\begin{theorem}\label{thm:tight-span}
  Let $\Gamma$ be a not necessarily regular subdivision of $\Delta_{k-1}\times\Delta_{n-k-1}$.  Then
  there exists a subdivision $\Sigma$ of $\Delta(k,n)$ such that:
  \begin{enumerate}
  \item $\Sigma\cap e_{[k]}^\perp=\iota(\Gamma)$.\label{it:tight-span:equal}
  \item Each maximal cell of $\Sigma$ contains the vertex $e_{[k]}$. \label{it:tight-span:max-cell}
  \item The tight-spans $\Atightspan{\Gamma}$ and $\Atightspan{\Sigma}$ are isomorphic. \label{it:tight-span:iso}
  \end{enumerate}
  Furthermore, if $\Gamma$ is induced by a lifting function $V\in\RR^{k\times(n-k)}$ and thus
  regular, then $\Sigma$ is also regular and induced by~$\tau_V$.  If, however, $\Gamma$ is not
  regular, then $\Sigma$ is not regular either.
\end{theorem}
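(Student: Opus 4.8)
The plan is to build $\Sigma$ as the cone over the vertex figure and to identify it, in the regular case, with the subdivision induced by $\tau_V$. Write $v=e_{[k]}$ and $Q=\Hypersimplex kn\cap e_{[k]}^\perp$, so that $\iota(\Gamma)$ is a subdivision of $Q$; for a cell $\gamma$ of $\iota(\Gamma)$ let $C(\gamma)$ be the cone with apex $v$ over $\gamma$ and put $D(\gamma)=C(\gamma)\cap\Hypersimplex kn$, declaring $\Sigma$ to be the set of all faces of the cells $D(\gamma)$. The decisive point is that $v$ is the only point of $\Hypersimplex kn$ with $\sum_{i\in[k]}x_i=k$, so $e_{[k]}^\perp$ separates $v$ from every other vertex and meets each ray from $v$ into $\Hypersimplex kn$ in exactly one point; hence $C(\gamma)\cap Q=\gamma$, the $D(\gamma)$ cover $\Hypersimplex kn$, and---because $e_{[k]}^\perp$ misses the apex---coning commutes with intersection, $C(\gamma)\cap C(\gamma')=C(\gamma\cap\gamma')$, so $D(\gamma)\cap D(\gamma')=D(\gamma\cap\gamma')$ is a common face of both (a supporting hyperplane of the cone that cuts out $C(\gamma\cap\gamma')$ also supports $D(\gamma)$). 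This shows that $\Sigma$ is a polytopal subdivision, that it satisfies~\eqref{it:tight-span:equal}, and, since the apex $v$ lies in every $D(\gamma)$, also~\eqref{it:tight-span:max-cell}.

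For the tight-span isomorphism~\eqref{it:tight-span:iso} I would match interior cells. A face of some $D(\gamma)$ along which the cone meets the boundary of $\Hypersimplex kn$ lies in a facet and is therefore not interior, so every interior cell of $\Sigma$ has the form $D(\gamma)$. As $v\in D(\gamma)$, any facet of $\Hypersimplex kn$ containing $D(\gamma)$ must pass through $v$, and facets through $v$ correspond exactly to the facets of the vertex figure $Q$; thus $D(\gamma)$ is interior precisely when $\gamma$ is an interior cell of $\iota(\Gamma)$. The assignment $\gamma\mapsto D(\gamma)$ preserves inclusion and raises dimension by one uniformly, so it restricts to an isomorphism of the posets of interior cells under reverse inclusion, giving $\Atightspan\Sigma\cong\Atightspan{\iota(\Gamma)}\cong\Atightspan\Gamma$.

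To settle regularity I would first treat the case that $\Gamma$ is induced by $V$, where the crux is that $\tau_V$ is affine on each $D(\gamma)$. Regarding the vertices of $\gamma$ as the edges of a bipartite graph with parts $[k]$ and $\{k{+}1,\dots,n\}$, the vertices of $D(\gamma)$ are the hypersimplex vertices $e_{([k]\setminus I)\cup J}$ indexed by the matchings of that graph (with row set $I$ and column set $J$), and $\tau_V(e_{([k]\setminus I)\cup J})=\tdet(V_{I,J})$, the minimum-weight matching on the submatrix $V_{I,J}$. Since $\gamma$ is a cell of the regular subdivision induced by $V$, the matrix $V$ agrees on $\gamma$ with an affine function $(i,j)\mapsto a_i+b_j$ and dominates it everywhere; this forces $\tdet(V_{I,J})=\sum_{i\in I}a_i+\sum_{j\in J}b_j$, which is affine in the vertex. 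The same inequalities $v_{ij}\ge a_i+b_j$, summed along matchings, show that the piecewise-affine lift agreeing with $\tau_V$ on the vertices folds convexly across each wall $D(\delta)$---strictly so over the interior walls, because the corresponding wall $\delta$ is genuinely present in the subdivision of $Q$ induced by $V$. Hence this lift is convex, equals the lower hull of $\tau_V$, and its regions of linearity are exactly the $D(\gamma)$; that is, $\Sigma$ is regular and induced by $\tau_V$. (Alternatively, once $\tau_V$ is known to be affine on the cells, one may combine Proposition~\ref{prop:vertex_figure} with the uniqueness in Proposition~\ref{prop:unique}.)

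For the converse, if $\Gamma$ is not regular but $\Sigma$ were regular, then slicing its convex piecewise-linear lifting function by the hyperplane $e_{[k]}^\perp$ would exhibit $\Sigma\cap e_{[k]}^\perp=\iota(\Gamma)$ as a regular subdivision, whence $\Gamma$ would be regular---a contradiction; so $\Sigma$ is non-regular exactly when $\Gamma$ is. I expect the main obstacle to be the regular identification of the previous paragraph: one must show not merely that $\tau_V$ restricts to a height function on the vertex figure (which is Proposition~\ref{prop:vertex_figure}) but that the tropical determinant transports the full affine-and-convex certificate of $V$ out to the cone cells $D(\gamma)$, so that $\tau_V$ induces $\Sigma$ itself and not some coarsening. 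The gluing in the non-regular case is comparatively soft, resting only on the geometry of the apex-avoiding hyperplane $e_{[k]}^\perp$.
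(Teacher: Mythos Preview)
Your coning construction and the arguments for parts~\eqref{it:tight-span:max-cell}, \eqref{it:tight-span:iso} and the non-regular converse match the paper's. The substantive gap is that you never verify that each $D(\gamma)$ is a \emph{subpolytope} of $\Hypersimplex kn$, i.e., that the cone-and-cut introduces no new vertices. In the paper's sense a ``subdivision'' uses only the original vertex set, and this is not automatic: Proposition~\ref{prop:unique} is phrased as ``at most one'' precisely because the stratification by the $D(\gamma)$ might fail to be polytopal in this sense. The paper handles this by translating $e_{[k]}$ to the origin and observing that the facet hyperplanes of the cones $\pos\gamma$ all have the special form $\sum_{i\in A\cup B}x_i=0$ with $A\subseteq[k]$ and $B\subseteq[n]\setminus[k]$; intersected with the hypersimplex facet hyperplanes $x_i\in\{-1,0,1\}$, a zero-dimensional intersection is forced to be a lattice point. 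You skip this and instead assert, in the regular paragraph, that ``the vertices of $D(\gamma)$ are the hypersimplex vertices indexed by the matchings of that graph.'' That characterization is exactly what is at stake: knowing that $e_\sigma\in D(\gamma)$ is equivalent to the existence of a \emph{fractional} matching on $\gamma$, and upgrading this to an integral matching (so that the vertex lies in the convex hull of hypersimplex vertices of $D(\gamma)$) is a Birkhoff/Hall argument you never make. Without it, both~\eqref{it:tight-span:iso} (a stray interior vertex would add a cell to $\Atightspan\Sigma$ with no counterpart in $\Atightspan\Gamma$) and the identification with $\tau_V$ are incomplete.

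For the regular identification you take a genuinely different route from the paper. You use the LP-dual certificate $(a_i,b_j)$ of a maximal cell $\gamma$ to exhibit an affine minorant $L_\gamma$ of $\tau_V$ which is tight on $D(\gamma)$, concluding that the $D(\gamma)$ are exactly the lower faces of the $\tau_V$-lift. The paper instead writes the natural height function on the coned subdivision as a minimum over fractional $[k]\times[k{+}1,n]$-transportation plans and invokes the integrality of min-cost flow to equate it with $\tau_V$ vertex by vertex. Both arguments rest on the same total-unimodularity of the bipartite incidence matrix; yours is closer to LP duality for the assignment problem, theirs to flow integrality. Your version is arguably more transparent once the vertex description of $D(\gamma)$ is in hand, but as written it presupposes that description---so the missing ``no new vertices'' lemma is doing double duty in your proof.
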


In particular, each tight-span of a (regular) subdivision of $\Delta_{k-1}\times\Delta_{n-k-1}$ also
arises as the tight-span of a (regular) matroid subdivision of $\Delta(k,n)$.  Notice that
Theorem~\ref{thm:tight-span} also yields an independent proof of
Proposition~\ref{prop:vertex_figure}.  It follows from Proposition~\ref{prop:unique} that $\Sigma$
is uniquely determined by $\Gamma$.

\begin{proof}
  For $S$ a subset of a real vector space, write $\pos S$ for the positive real span of $S$.

  Consider the polytope $T:=\conv{\smallSetOf{e_\sigma-e_{[k]}}{\sigma\in\binom{[n]}k}}$ obtained by
  translating $\Delta(k,n)$.  We equip the vertex figure $U$ at the origin with the given subdivision
  $\Gamma$ of $\Delta_{k-1}\times\Delta_{n-k-1}$ and define
  \begin{equation}\label{eq:Sigma}
    \Sigma \ := \ \SetOf{\pos \gamma \cap T}{\gamma \in \Gamma}\,.
  \end{equation}
  Notice that $\Sigma$ is constructed as in the proof of Proposition~\ref{prop:unique}.

  We will show that $\Sigma$ is a valid subdivision of $T$.  By construction and since $\Gamma$ is a
  valid subdivision of $U$, it suffices to show that all zero-dimensional faces of $\Sigma$ are
  actually vertices of $T$.  So let $v\in \Sigma$ be zero-dimensional.  Then $v$ is the intersection
  of linear hyperplanes spanned by vertices of $U$ and a face $F$ of $T$.  The vertices of $U$ are
  all points of the form $e_i-e_j$ with $j\in[k]$ and $i\in [n]\setminus [k]$. Taking into account
  that $U$ is contained in the hyperplane $\sum_{i=1}^n x_i=0$, this implies that a hyperplane
  spanned by these vertices can be described by an equation of the form $\sum_{i \in A\cup B}x_i=0$
  for some non-empty $A\subset [k]$ and $B\subset [n]\setminus [k]$.  On the other hand, $F$ (as a
  face of $T$) is the intersection of hyperplanes of the form $x_i\in\{0,1,-1\}$. The only
  possibility for an intersection of these two types of hyperplanes to be zero-dimensional is to be
  a vertex of $U$.  This shows \eqref{it:tight-span:equal}.

  It is immediate from the construction that each maximal cell of $\Sigma$ contains the origin.
  This establishes \eqref{it:tight-span:max-cell}.  To show \eqref{it:tight-span:iso} first observe
  that for any two cells $C$ and $D$ of $\Gamma$ we have $(\pos C\cap T)\cap(\pos D\cap T)=(\pos
  C\cap D)\cap T$.  Second, $\dim(\pos C\cap T)=\dim C+1$, and hence maximal cells in $\Gamma$
  correspond to maximal cells in $\Sigma$.  This yields \eqref{it:tight-span:iso}.

  We now turn to the situation where $\Gamma$ is regular and induced by the lifting function $V$.
  By construction, this implies that $\Sigma$ then is induced by the lifting function $\kappa$ with
  \begin{align}
    \label{eq:sigma-lifting}
    \kappa(e_\sigma-e_{[k]}) \ = \ \min_w \sum_{i,j} w(i,j) \bar V_{i,j}\,,
  \end{align}
  where $w$ ranges over all functions $[k] \times [k+1,n] \to \RR_{\geq 0}$ with $\sum_{i,j}
  w(i,j)(e_j-e_i) = e_B - e_A$. Here we set $A = [k] \setminus ([k] \cap \sigma)$ and $B = [k+1,n]
  \cap \sigma$ for all $\sigma\in\tbinom{[n]}{k}$.

  On the other hand, we have
  \begin{align}
    \label{eq:tau-V-lifting}
    \tau_V(\sigma) \ = \ \tdet(\bar V_\sigma) \ = \ \min_\alpha \sum_{i\in A} \bar V_{i,\alpha(i)}\,,
  \end{align}
  where the minimum ranges over all bijections $\alpha: A\to B$.  Note that $\#A=\#B$ by
  construction.

  The minimum in \eqref{eq:sigma-lifting} is obtained by the minimal cost flow in the complete
  bipartite graph with vertex set $A\cup B$ and edge set $\smallSetOf{\{i,j\}}{i\in A,j\in B}$ with
  one unit of fluid coming in at each of the sources in $A$ and going out at each of the sinks in $B$;
  where $\bar V_{ij}$ is the cost of flowing one unit from $i$ to $j$.

  The minimum in \eqref{eq:tau-V-lifting} is the minimal cost flow in the same graph but restricting
  the flow values to $0$ and $1$.  Solutions to minimal cost flow problems with integral constraints
  are always integral, showing that $\kappa(e_\sigma-e_{[k]})=\tau_V(\sigma)$; see, e.g.,
  \cite[\S10.2]{Schrijver03}.

  It remains to consider the situation when $\Gamma$ is not regular.  Suppose $\Sigma$ were regular
  with lifting function $\lambda$.  Then by restricting $\lambda$ to the vertices of
  $\Hypersimplex{k}{n}$ which are neighbors to the origin, we would obtain a regular subdivision of
  the vertex figure $U$.  By construction this would agree with $\Gamma$, a contradiction.
\end{proof}

\begin{remark}
  Let $P$ be a face of $\Gamma$. Let $G$ be the bipartite graph with vertex set $[n]$ and with an
  edge $(i,j)$ if $e_{[k]} - e_i + e_j$ is a vertex of $F$.  Let $Q$ be the corresponding face of
  $\Sigma$. Then the above proof shows that the matroid corresponding to $Q$ is the principal
  transversal matroid of the graph $G$; see \cite{Transversal}.
\end{remark}

The symmetric group $\Sym(n)$ acts linearly on the Euclidean space $\RR^n$ by permuting the
coordinate directions.  This induces a transitive action on the set of vertices of the hypersimplex
$\Delta(k,n)$.  The stabilizer of a vertex acts transitively on the set of neighbors of this vertex.  
This induces a vertex-transitive action on $\Delta_{k-1}\times\Delta_{n-k-1}$.  On the level
of lifting functions written in matrix form, this action permutes the rows and the columns.
Throughout we identify a $k{\times}(n-k)$-matrix with its ordered sequence of $n-k$ points in
$\TT^{k-1}$.  Permuting the columns corresponds to translating the points in the configuration, and
permuting the rows corresponds to the induced action on the $k$ coordinate directions of the
tropical torus $\TT^{k-1}$.  Now two point configurations (of $n-k$ points each) in $\TT^{k-1}$ are
\emph{equivalent} if they are in the same orbit of the semi-direct product $\RR^k\rtimes\Sym(k)$,
where the additive group of $\RR^k$ acts by translations.  In this sense, two
$k{\times}(n-k)$-matrices are equivalent if they can be transformed into one another by the
following operations:
\begin{enumerate}
\item permuting the rows,
\item permuting the columns,
\item adding an arbitrary vector in $\RR^k$ to all the columns,
\item adding a constant multiple of $\vones$ to any column.
\end{enumerate}
Notice that the roles of the rows and the columns in the above is symmetric: adding a constant
multiple of $\vones=(1,1,\dots,1)$ to the $i$-th row is the same as adding the vector $e_i$ to all
columns, and adding the vector $v\in\RR^{n-k}$ to all the rows is the same as adding $v_j\vones$ to
the $j$-th column for all $1\le j\le n-k$.  The action of $\Sym(n)$ on $\RR^n$ also induces natural
actions on the Grassmannian $\Gr kn$ and on the Dressian $\Dr kn$.

In view of the transitivity of the $\Sym(n)$ action on the vertices of $\Delta(k,n)$,
Proposition~\ref{prop:vertex_figure} and Theorem~\ref{thm:tight-span} generalize to arbitrary
vertices $e_\sigma$ of $\Hypersimplex kn$ instead of $e_{[k]}$. In fact, the entire construction in
the beginning of this section can be generalized for an arbitrary $k$-element subset $\sigma$ of
$[n]$ to define functions $\tau^\sigma_V$ and $\Phi^\sigma$ having the same properties as
$\tau_V=\tau^{[k]}_V$ and $\Phi=\Phi^{[k]}$.

Since there is no restriction on the matrix $V$ we obtain the following result.

\begin{corollary}
  For each regular subdivision $\Gamma$ of $\Delta_{k-1}\times\Delta_{n-k-1}$ and for each vertex
  $v$ of $\Delta(k,n)$ there exists a regular matroid subdivision $\Sigma$ of $\Delta(k,n)$ such that
  the subdivision of the vertex figure of $v$ induced by $\Sigma$ coincides with~$\Gamma$.
\end{corollary}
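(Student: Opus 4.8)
The plan is to read this off directly from the machinery already in place, observing only that every regular subdivision arises from a lifting matrix. First I would record that a regular subdivision $\Gamma$ of $\Delta_{k-1}\times\Delta_{n-k-1}$ is, by definition, induced by some lifting function; since the vertices of $\Delta_{k-1}\times\Delta_{n-k-1}$ are indexed by the pairs $(i,j)$ with $i\in[k]$ and $j\in[n-k]$, such a lifting function is exactly an element $V\in\RR^{k\times(n-k)}$, and conversely every such $V$ induces a regular subdivision. Thus, as $V$ ranges over $\RR^{k\times(n-k)}$, the induced $\Gamma$ ranges over \emph{all} regular subdivisions of $\Delta_{k-1}\times\Delta_{n-k-1}$, with no constraint on $V$.

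Next I would dispose of the vertex $v=e_{[k]}$. Applying Proposition~\ref{prop:vertex_figure} (or, for the same conclusion, Theorem~\ref{thm:tight-span}) to this $V$ produces the regular matroid subdivision $\Sigma$ of $\Delta(k,n)$ induced by $\tau_V$, together with the identity $\Sigma\cap e_{[k]}^\perp=\iota(\Gamma)$. Since $\iota$ is the bijection identifying the vertex figure $\Delta(k,n)\cap e_{[k]}^\perp$ of $e_{[k]}$ with $\Delta_{k-1}\times\Delta_{n-k-1}$, this equality says precisely that the subdivision of the vertex figure of $e_{[k]}$ induced by $\Sigma$ coincides with $\Gamma$, which is the assertion for $v=e_{[k]}$.

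For an arbitrary vertex $v=e_\sigma$ I would invoke the $\sigma$-indexed construction $\tau^\sigma_V$, $\Phi^\sigma$ introduced in the paragraph preceding the corollary, which satisfies the exact analogues of Lemma~\ref{lem:Phi-tau} and Proposition~\ref{prop:vertex_figure}. Feeding the same $V$ into the $\sigma$-analogue of Proposition~\ref{prop:vertex_figure} yields a regular matroid subdivision $\Sigma$ induced by $\tau^\sigma_V$ whose restriction to the vertex figure of $e_\sigma$ equals $\Gamma$. Equivalently, one may transport the case $v=e_{[k]}$ along a permutation $g\in\Sym(n)$ with $g\,e_{[k]}=e_\sigma$: since $g$ acts on lifting matrices by permuting rows and columns, it preserves both regularity and the matroid property, so $g$ carries the base-case subdivision to the required one at $e_\sigma$.

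The only point needing care is this arbitrary-vertex step, where one must check that $\tau^\sigma_V$ genuinely inherits the conclusion of Proposition~\ref{prop:vertex_figure} and that the symmetry identification reproduces $\Gamma$ itself rather than a relabelled copy. Both are guaranteed by the preceding discussion: the generalization of $\tau_V$ and $\Phi$ to $\tau^\sigma_V$ and $\Phi^\sigma$ has already been asserted, and the $\Sym(n)$-action is compatible with the $\iota$-identifications of the vertex figures. Since there is no restriction on $V$, this realizes every regular $\Gamma$ at every vertex, completing the argument.
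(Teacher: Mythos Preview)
Your proposal is correct and follows essentially the same approach as the paper. The paper's own justification is the single sentence ``Since there is no restriction on the matrix $V$ we obtain the following result,'' placed immediately after the paragraph asserting that Proposition~\ref{prop:vertex_figure} and Theorem~\ref{thm:tight-span} generalize to an arbitrary vertex $e_\sigma$ via $\tau^\sigma_V$; you have simply unpacked this remark in full detail.
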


It is now an interesting question to ask which regular matroid subdivisions of $\Delta(k,n)$ are
induced by tropical Pl\"ucker vectors $\tau_V$ as defined in \eqref{eq:tau_v}.  This question has
the following complete answer for $k=2$.

\begin{example}\label{exmp:k=2}
  Let $k=2$.  Then $\Delta_1\times\Delta_{n-3}$ is a prism over a simplex, and the tight-span of any
  of its regular subdivisions is a path.  Equivalently, the tropical convex hull of two distinct
  points $p$ and $q$ in $\TT^{n-3}$ is a one-dimensional polytopal complex.  The tropical line
  segment $\tconv(p,q)$ is the union of at most $n-3$ ordinary line segments.  If $p$ and $q$ are
  generic, equality is attained, and $V:=(p|q)\in\RR^{2\times (n-2)}$ induces a (regular)
  triangulation of $\Delta_1\times\Delta_{n-3}$.  In this case the tight-span of the matroid
  decomposition induced by $\tau_V$ corresponds to a \emph{caterpillar tree} with precisely $n-3$
  interior edges; see \cite[Fig.~8 (left)]{MR2515769}.  In the non-generic case some of these
  interior edges shrink to points.  The construction of $\tau_V$ from $p$ and $q$ is a special case
  of the construction of a tropical linear space from a set of tropically collinear points due to
  Develin~\cite{MR2131129}.  For more details see also Example~\ref{exmp:k=2,cont} below.

  Notice that all trivalent trees with (at most) five leaves are caterpillar trees.  For
  $n~{=}~6$ there is precisely one combinatorial type of tree which is not a caterpillar tree: the
  \emph{snowflake tree}; see \cite[Fig.~8 (right)]{MR2515769}.  For $n>6$ there is a greater
  variety of non-caterpillar trees. These trees correspond to elements of $\Gr 2n$ but are not
  be obtained from tropical line segments via the map $\tau$.
\end{example}

For our investigations further below it is also instructive to look at one more special case.

\begin{example}\label{exmp:k=3,n=6}
  Let $k=3$ and $n=6$.  The tropical Grassmannian $\Gr{3}{6}$ has been analyzed in detail in
  \cite{MR2071813} (see also~\cite[Fig.~1]{MR2515769}). We will adopt their
  notation.  Observe that $\Gr36$ and the Dressian $\Dr36$ have the same support, but the
  fan structures differ.  Up to symmetry, there are seven distinct combinatorial types of finest
  matroid subdivisions of $\Delta(3,6)$, that is, there are seven types of generic $2$-planes in
  $5$-space.  Five of these types arise from configurations of three points in $\TT^2$ via the
  lifting $V\mapsto\tau_V$; we list five $3\times3$-matrices along with the types of the induced
  generic $2$-planes; see Figure~\ref{fig:planes}:
  \[
  \begin{array}{ccccc}
    \begin{pmatrix} 2 & 1 & 0 \\ 0 & 2 & 0 \\ 0 & 0 & 1 \end{pmatrix} &
    \begin{pmatrix} 3 & 0 & 2 \\ 0 & 1 & 0 \\ 0 & 0 & 1 \end{pmatrix} &
    \begin{pmatrix} 0 & 0 & 0 \\ 0 & 1 & 2 \\ 0 & 2 & 4 \end{pmatrix} &
    \begin{pmatrix} 0 & 2 & 2 \\ 0 & 3 & 0 \\ 0 & 0 & 1 \end{pmatrix} &
    \begin{pmatrix} 0 & 1 & 1 \\ 1 & 0 & 1 \\ 1 & 1 & 0 \end{pmatrix} \\[1.75em]
    \text{EEEG} & \text{EEFG} & \text{EEFF(b)} & \text{EFFG} & \text{FFFGG}
  \end{array}
  \]
  The two remaining types EEEE and EEFF(a) do not occur in this way.  Notice that the tight-spans of
  EEFF(a) and EEFF(b) only differ in their labellings; both ``look like'' tropical polytopes, but
  one arises via the lifting $V\mapsto\tau_V$, while the other one does not.
  Figure~\ref{fig:planes} shows the tropical complexes, which coincide with the respective tight
  spans of the induced matroid subdivisions.  The latter are explicitly described in
  Table~\ref{tab:planes}, and this list shows that $e_{123}$ is a vertex of each maximal cell in
  each of these matroid decompositions. All occurring matroids are graphical, defined by one of the two
  graphs in \cite[Fig.~7]{MR2515769} to the left.
\end{example}

\begin{figure}[hbt]
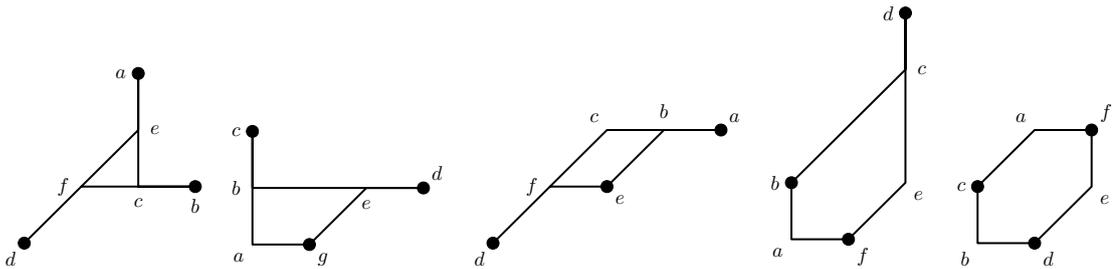

  \includegraphics[scale=.75]{planes1.mps}\quad
  \includegraphics[scale=.75]{planes2.mps}\quad
  \includegraphics[scale=.75]{planes3.mps}\quad
  \includegraphics[scale=.75]{planes4.mps}\quad
  \includegraphics[scale=.75]{planes5.mps}
  \caption{Tropical polytopes in $\TT^2$ leading to generic tropical planes in $\TT^5$. Pictures
    are drawn by taking the first two coordinates in the usual directions $e_1$, $e_2$ and the last
    coordinate in direction $-e_1-e_2$.  Point labels match the lists of matroids in
    Table~\ref{tab:planes}}
  \label{fig:planes}
\end{figure}

\begin{table}\raggedright
\begin{tabular}{lll}


%
EEEG:\\
&$a$&123\ 125\ 134\ 135\ 136\ 145\ 156\ 235\ 345\ 356\\
&$b$&123\ 124\ 134\ 234\ 235\ 236\ 245\ 246\ 345\ 346\\
&$c$&123\ 124\ 134\ 136\ 146\ 235\ 236\ 245\ 246\ 345\ 346\ 356\ 456\\
&$d$&123\ 124\ 125\ 126\ 136\ 146\ 156\ 236\ 246\ 256\\
&$e$&123\ 124\ 125\ 134\ 136\ 145\ 146\ 156\ 235\ 245\ 345\ 356\ 456\\
&$f$&123\ 124\ 125\ 136\ 146\ 156\ 235\ 236\ 245\ 246\ 256\ 356\ 456\\
\addlinespace
%

EEFG:\\
&$a$&123\ 125\ 126\ 134\ 136\ 145\ 146\ 156\ 236\ 256\ 346\ 456\\
&$b$&123\ 125\ 134\ 136\ 145\ 156\ 235\ 236\ 256\ 345\ 346\ 356\ 456\\
&$c$&123\ 125\ 134\ 135\ 136\ 145\ 156\ 235\ 345\ 356\\
&$d$&123\ 124\ 134\ 234\ 235\ 236\ 245\ 246\ 345\ 346\\
&$e$&123\ 124\ 125\ 134\ 145\ 235\ 236\ 245\ 246\ 256\ 345\ 346\ 456\\
&$f$&123\ 124\ 125\ 126\ 134\ 145\ 146\ 236\ 246\ 256\ 346\ 456\\
\addlinespace
%

%
EEFF(b):\\
&$a$&123\ 124\ 134\ 234\ 235\ 236\ 245\ 246\ 345\ 346\\
&$b$&123\ 124\ 134\ 135\ 145\ 235\ 236\ 245\ 246\ 345\ 346\ 356\ 456\\
&$c$&123\ 124\ 134\ 135\ 136\ 145\ 146\ 236\ 246\ 346\ 356\ 456\\
&$d$&123\ 124\ 125\ 126\ 136\ 146\ 156\ 236\ 246\ 256\\
&$e$&123\ 124\ 125\ 135\ 145\ 235\ 236\ 245\ 246\ 256\ 356\ 456\\%
&$f$&123\ 124\ 125\ 135\ 136\ 145\ 146\ 156\ 236\ 246\ 256\ 356\ 456\\
\addlinespace
%

EFFG:\\
&$a$&123\ 125\ 126\ 136\ 156\ 234\ 236\ 245\ 246\ 256\ 346\ 456\\
&$b$&123\ 125\ 136\ 156\ 234\ 235\ 236\ 245\ 256\ 346\ 356\ 456\\
&$c$&123\ 125\ 134\ 136\ 145\ 156\ 234\ 235\ 245\ 345\ 346\ 356\ 456\\
&$d$&123\ 125\ 134\ 135\ 136\ 145\ 156\ 235\ 345\ 356\\
&$e$&123\ 124\ 125\ 134\ 136\ 145\ 146\ 156\ 234\ 245\ 346\ 456\\
&$f$&123\ 124\ 125\ 126\ 136\ 146\ 156\ 234\ 245\ 246\ 346\ 456\\
\addlinespace

FFFGG:\\
&$a$&123\ 125\ 126\ 135\ 156\ 234\ 235\ 245\ 246\ 256\ 345\ 456\\
&$b$&123\ 126\ 135\ 136\ 156\ 234\ 236\ 246\ 345\ 346\ 356\ 456\\
&$c$&123\ 126\ 135\ 156\ 234\ 235\ 236\ 246\ 256\ 345\ 356\ 456\\
&$d$&123\ 126\ 134\ 135\ 136\ 146\ 156\ 234\ 246\ 345\ 346\ 456\\
&$e$&123\ 124\ 126\ 134\ 135\ 145\ 146\ 156\ 234\ 246\ 345\ 456\\
&$f$&123\ 124\ 125\ 126\ 135\ 145\ 156\ 234\ 245\ 246\ 345\ 456
\end{tabular}
%
\caption{List of matroid bases per maximal cell for each type of generic tropical plane in $\TT^5$
  shown in Figure~\ref{fig:planes}}
\label{tab:planes}
\end{table}

\begin{proposition}\label{prop:in-Grassmannian}
  For every $V\in \RR^{k\times(n-k)}$ the point $\tau_V$ is contained in the tropical Grassmannian $\Gr kn$.
\end{proposition}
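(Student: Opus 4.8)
The plan is to realize $\tau_V$ as the tropicalization of an honest $\KK$-point of the classical Grassmannian, which is exactly what membership in $\Gr kn$ requires. I fix a field $\KK$ carrying a valuation $\val\colon\KK^\times\to\RR$ that surjects onto $\RR$ and has residue field $\CC$; the Hahn (generalized Puiseux) series field $\CC((t^{\RR}))$ serves this purpose and accommodates arbitrary real entries of $V$. I then lift the augmented matrix $\bar V=(E_k\,|\,V)$ to a matrix $\tilde{\bar V}$ over $\KK$ by replacing every finite entry $\bar V_{ij}$ with $c_{ij}\,t^{\,\bar V_{ij}}$ and every $\infty$ entry with $0$, taking $c_{ij}=1$ on the identity block --- so that it lifts to the ordinary identity matrix $I_k$ --- and the $c_{ij}$ on the $V$-block to be generic, say algebraically independent over $\QQ$. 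The resulting $\tilde{\bar V}=(I_k\,|\,\tilde V)$ is a rank-$k$ matrix whose row span is a $\KK$-point of $\Gr kn$; since its Pl\"ucker coordinate at $[k]$ is $\det I_k=1$ of valuation $0=\tau_V([k])$, there is no scaling ambiguity, and it remains only to verify $\val\!\left(\det\tilde{\bar V}_\sigma\right)=\tau_V(\sigma)$ for every $\sigma\in\binom{[n]}{k}$.

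For the minor I expand along the identity columns. Laplace expansion of $\det\tilde{\bar V}_\sigma$ along the columns indexed by $\sigma\cap[k]$ reduces it, up to a sign $\varepsilon_\sigma=\pm1$ independent of the remaining choices, to the minor $\det\tilde V_{A,B}$ of $\tilde V$ on the complementary rows $A=[k]\setminus\sigma$ and columns $B=\sigma\cap[k+1,n]$, exactly the index sets appearing in the proof of Theorem~\ref{thm:tight-span}. Expanding this minor, its terms are indexed by the bijections $\alpha\colon A\to B$, the $\alpha$-term having valuation $\sum_{i\in A}\bar V_{i,\alpha(i)}$ because the units $c_{ij}$ have valuation $0$. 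Minimizing over $\alpha$ returns precisely $\tdet(\bar V_\sigma)=\tau_V(\sigma)$ by \eqref{eq:tdet} and \eqref{eq:tau-V-lifting}, so the ultrametric inequality gives $\val\!\left(\det\tilde{\bar V}_\sigma\right)\ge\tau_V(\sigma)$ with no hypotheses on the $c_{ij}$.

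The one delicate point, which I expect to be the only real obstacle, is to upgrade this inequality to an equality by ruling out cancellation of the lowest-order terms. Collecting the coefficient of $t^{\tau_V(\sigma)}$ in $\det\tilde{\bar V}_\sigma$ produces, up to the sign $\varepsilon_\sigma$, the polynomial $\sum_\alpha\operatorname{sgn}(\alpha)\prod_{i\in A}c_{i,\alpha(i)}$ summed over exactly the bijections $\alpha$ that attain the tropical minimum. Distinct bijections contribute distinct monomials in the indeterminates $c_{ij}$, so this polynomial has all coefficients $\pm1$ and is in particular nonzero; choosing the $c_{ij}$ algebraically independent --- or merely outside the finite union, over all $\sigma$, of the proper hypersurfaces these polynomials define --- forces it not to vanish. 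Hence $\val\!\left(\det\tilde{\bar V}_\sigma\right)=\tau_V(\sigma)$ holds simultaneously for all $\sigma$, the tropical Pl\"ucker vector of $\tilde{\bar V}$ equals $\tau_V$, and therefore $\tau_V\in\Gr kn$.
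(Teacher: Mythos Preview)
Your proof is correct and follows essentially the same approach as the paper's: both lift $\bar V$ generically to a matrix over a Puiseux/Hahn series field and argue that the valuations of its classical Pl\"ucker coordinates equal $\tau_V$. The paper is terser, invoking genericity and \cite[Prop.~4.2]{TLS} in lieu of your explicit Laplace expansion and non-cancellation argument, but the underlying idea is identical.
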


\begin{proof}
  Take any \emph{lift} of $\bar V$ to a matrix $V^*\in K^{k\times n}$ with coefficients in a Puiseux
  series field~$K$ (see \cite[Sec.~4]{TLS} and \cite{Markwig07} for details).  Then the
  valuation map $\val$ of $K$ takes $V^*_{ij}$ to $\bar V_{ij}$ for all $i$ and $j$.  The first $k$
  columns of $\bar V$ form the $k{\times}k$-tropical identity matrix, and therefore the first $k$
  columns of $V^*$ are linearly independent.  We conclude that the column space $L(V^*)$ is a
  $k$-dimensional subspace of the vector space $K^n$. Without loss of generality we may assume that
  the lift to $V^*$ is generic, so we have $\tau_V(\sigma)=\val(\frp(\sigma))$, where
  $\frp:\binom{[n]}k\to K$ are the classical Pl\"ucker coordinates of $L(V^*)$. Now
  \cite[Prop.~4.2]{TLS} implies that the tropicalization of $L(V^*)$ coincides with the
  tropical linear space defined by $\tau_V$, that is, $\tau_V$ is a tropical Pl\"ucker
  vector.
\end{proof}

\begin{remark}
  This gives us a sufficient criterion to show that a given tropical Pl\"ucker vector $\pi\in \Dr
  kn$ is actually contained in the tropical Grassmannian $\Gr kn$: For each $\sigma \in
  \binom{[n]}k$ (that is, for each vertex of $\Hypersimplex kn$) compute the matrix
  $\Phi^\sigma(\pi)$ and check if $\tau^\sigma_{\Phi^\sigma(\pi)}=\pi$. If there is some vertex
  $e_\sigma$ for which this is the case, Proposition~\ref{prop:in-Grassmannian} yields that $\pi\in
  \Gr kn$. That this criterion is not necessary follows from the existence of the generic tropical
  planes of types EEEE and EEFF(a) from Example~\ref{exmp:k=3,n=6} which cannot be obtained via this
  construction.  For EEEE this is easily seen, since the tight-span is a snowflake tree (that is, a
  tree with exactly one interior node) with six leaves.  This does not correspond to a point
  configuration in $\TT^2$.
\end{remark}

\begin{corollary}\label{cor:embedding}
  The map $\tau$ induces a piecewise-linear embedding of the secondary fan of
  $\Delta_{k-1}\times\Delta_{n-k-1}$ into the Dressian $\Dr{k}{n}$.  The image of $\tau$ is a subset
  of the tropical Grassmannian $\Gr{k}{n}$.
\end{corollary}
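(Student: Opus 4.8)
The plan is to assemble the Corollary from the three main tools developed above: Lemma~\ref{lem:Phi-tau}, Proposition~\ref{prop:in-Grassmannian}, and Theorem~\ref{thm:tight-span}. There are four assertions to verify: that $\tau$ takes values in $\Dr kn$, that its image lies in $\Gr kn$, that $\tau$ is a piecewise-linear topological embedding, and that it respects the two secondary-fan structures cone by cone.

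First I would dispose of the three routine points. The value $\tau_V$ is a finite tropical Pl\"ucker vector for every matrix $V$, as observed just after~\eqref{eq:tau_v}; since the Dressian is by definition the set of such vectors, $\tau$ maps $\RR^{k\times(n-k)}$ into $\Dr kn$. Proposition~\ref{prop:in-Grassmannian} then gives the stronger statement that $\tau_V$ even lies in $\Gr kn$, so the image is contained in the tropical Grassmannian. By Lemma~\ref{lem:Phi-tau} the map $\tau$ is piecewise-linear and injective, and moreover $\Phi\circ\tau=\mathrm{id}$ with $\Phi$ linear. Consequently $\Phi$ restricts to a continuous inverse of $\tau$ on its image, so $\tau$ is a homeomorphism onto $\tau(\RR^{k\times(n-k)})$; this is what the term \emph{embedding} refers to here.

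The substantial part is the compatibility with the fan structures. A cone of the secondary fan of $\Delta_{k-1}\times\Delta_{n-k-1}$ consists of all lifting matrices $V$ inducing one fixed regular subdivision $\Gamma$. For $V$ in the relative interior of this cone, Theorem~\ref{thm:tight-span} produces a regular matroid subdivision $\Sigma=\Sigma(\Gamma)$ of $\Delta(k,n)$ that is induced by $\tau_V$ and, by its construction in the proof of that theorem, depends only on $\Gamma$ and not on the particular $V$. Hence $\tau$ sends this relative interior into a single cone of the Dressian, namely the one indexed by $\Sigma(\Gamma)$, and by continuity the closed cone maps into the closed Dressian cone. The assignment $\Gamma\mapsto\Sigma(\Gamma)$ is injective because $\Sigma(\Gamma)\cap e_{[k]}^\perp=\iota(\Gamma)$ by Theorem~\ref{thm:tight-span}~\eqref{it:tight-span:equal} and $\iota$ is a bijection on vertex sets; and it is order-preserving in both directions, since refining $\Gamma$ refines the cones over its cells, while the tight-span isomorphism of Theorem~\ref{thm:tight-span}~\eqref{it:tight-span:iso} together with Proposition~\ref{prop:ts-isom} rules out any spurious coarsening. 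This makes the induced map on cones an order-embedding of the face posets, which is exactly the desired embedding of fans; finally one checks that $\tau$ carries the lineality of the source (the trivial subdivision) into the lineality of $\Dr kn$, so that everything descends modulo lineality.

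I expect this last step to be the main obstacle, specifically the claim that $\Gamma\mapsto\Sigma(\Gamma)$ preserves the refinement order in the restrictive direction, that is, that $\Sigma(\Gamma)$ admits no matroid coarsening beyond those coming from coarsenings of $\Gamma$. This is precisely where the tight-span isomorphism of Theorem~\ref{thm:tight-span} does the work, via Proposition~\ref{prop:ts-isom}: it forces the interior cell structures of $\iota(\Gamma)$ and $\Sigma(\Gamma)$ to match, so that no cells of $\Sigma(\Gamma)$ can merge without a corresponding merge already visible in $\Gamma$.
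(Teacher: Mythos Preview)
Your proposal is correct and follows the same approach as the paper; you simply unpack what the paper compresses into two sentences. The paper's own proof reads, in full: ``It follows from Theorem~\ref{thm:tight-span} and, in particular, the construction~\eqref{eq:Sigma} that $\tau$ induces a piecewise-linear embedding. The final claim is now a consequence of Proposition~\ref{prop:in-Grassmannian}.'' Your explicit use of Lemma~\ref{lem:Phi-tau} for the piecewise-linearity and the left inverse $\Phi$, and your careful check that $\Gamma\mapsto\Sigma(\Gamma)$ is an order-embedding of face posets, are exactly the details hidden behind that first sentence. One small simplification: for the ``restrictive'' direction of the order-embedding you do not really need Proposition~\ref{prop:ts-isom}; Theorem~\ref{thm:tight-span}\eqref{it:tight-span:equal} alone already gives it, since a coarsening of $\Sigma(\Gamma)$ restricts along $e_{[k]}^\perp$ to a coarsening of $\iota(\Gamma)$.
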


\begin{proof}
  It follows from Theorem~\ref{thm:tight-span} and, in particular, the construction~\eqref{eq:Sigma}
  that $\tau$ induces a piecewise-linear embedding.  The final claim is now a consequence of
  Proposition~\ref{prop:in-Grassmannian}.
\end{proof}

See also Example~\ref{exmp:k=2,cont} below.

\section{Tropically Rigid Point Configurations}\label{sec:trop-rid}

Throughout the following let $V\in\RR^{k\times(n-k)}$ be a $k\times(n-k)$-matrix with $n\ge 2k$.  We
will read (the columns of) $V$ as a configuration of $k$ labeled points in $\TT^{n-k-1}$, possibly with
repetitions.  Associated with $V$ is the regular subdivision $\Gamma$ of
$\Delta_{k-1}\times\Delta_{n-k-1}$ which is induced by lifting the vertex $(e_i,e_j)$ to $v_{ij}$.
The \emph{secondary fan} $\frS$ of $\Delta_{k-1}\times\Delta_{n-k-1}$ is the polyhedral fan in
$\RR^{k\times (n-k)}$ which arises from grouping together those lifting functions which induce the
same subdivision. The fan $\frS$ has a lineality space of dimension $n-1$.  Therefore, by taking
quotients we can view $\frS$ as a fan in $\RR^{k(n-k)-n+1}=\RR^{kn-n-k^2+1}$.

The (regular) subdivisions of $\Delta_{k-1}\times\Delta_{n-k-1}$ are partially ordered by
refinement.  The point configuration $V$ is \emph{generic} if $V$ considered as a lifting function
induces a triangulation, that is, a finest subdivision.  At the other extreme we call $V$
\emph{tropically rigid} if it induces a coarsest (non-trivial) subdivision.  The matrix $V$ gives
rise to a polyhedral subdivision of $\TT^{k-1}$ according to type, and the bounded cells form the
tropical polytope $\tconv(V)$; see Develin and Sturmfels~\cite[Sec.~3]{DevelinSturmfels04}.  The bounded
cells of the type decomposition are precisely the cells of the tight-span of $\Gamma$.  The set
$\tconv(V)$ endowed with its canonical cell decomposition by type is called the \emph{tropical
  complex} of $V$. Its vertices are the \emph{pseudo-vertices} of the point configuration $V$.  They
bijectively correspond to the maximal cells of~$\Gamma$.

The purpose of the remainder of this section is to list many examples of tropically rigid point
configurations since these will be used later to construct rays of the Dressians.

\begin{example}\label{exmp:splits}
  Consider the point
  \[
  p_\ell \ = \ (\underbrace{0,0,\dots,0}_{\ell},\underbrace{1,1,\dots,1}_{k-\ell})
  \]
  in $\TT^{k-1}$.  The tropical line segment $\tconv(0,p_\ell)$ is the ordinary line segment from
  $0$ to $p_{\ell}$.  The tropical complex has two vertices, $0$ and $p_{\ell}$, corresponding to
  the two maximal cells of the dual regular subdivision of $\Delta_{k-1}\times\Delta_1$.
  Subdivisions with precisely two maximal cells are called \emph{splits}.
\end{example}

\begin{example}\label{ex:trop-rigid}
 The tropical complex $\Gamma$ of the point configuration formed by the columns of the $k{\times}k$-matrix
  \[
  V \ = \ \begin{pmatrix}
    0      & 0 & \cdots & 0\\
    1      & 0 & \ddots & \vdots \\
    \vdots & \ddots & \ddots & 0 \\
    1      & \cdots & 1 & 0
  \end{pmatrix}
  \]
  is a $(k{-}1)$-simplex.  Its dual $\Gamma$ is a coarsest subdivision of
  $\Delta_{k-1}\times\Delta_{n-k-1}$~(more precisely, a \emph{$k$-split}) and hence the point
  configuration $V$ is tropically rigid, see \cite[Sec.~4]{Herrmann09}).
\end{example}

For a subdivision $\Gamma$ of $\Delta_{k-1}\times\Delta_{n-k-1}$, one can give an explicit
description of finitely many inequalities and equations describing the secondary cone of of all
weight functions functions yielding this subdivision; see \cite[Cor.~5.2.7]{Triangulations}.  So,
given a matrix $V\in\RR^{k\times(n-k)}$ we can decide if the subdivision $\Gamma$ it describes is
coarsest by solving a linear program to determine the dimension of the secondary cone of $\Gamma$.

\begin{example}\label{exmp:rigid-special}
  The columns of the matrix
  \[
  V \ = \ \begin{pmatrix}
    0 & 0 & 0 &  0 &  0\\
    1 & 1 & 0 & -1 & -1\\
    0 & 1 & 1 &  0 & -1
  \end{pmatrix} \, ,
  \]
  form a configuration of five points in $\TT^2$, see Figure~\ref{fig:rigid-special} to the right.
  The origin $(0,0,0)$ is a pseudo-vertex, but it does not occur among the generators.  The induced
  regular subdivision of $\Delta_2\times\Delta_4$ is coarsest, and $V$ is tropically rigid.  The
  tropical Pl\"ucker vector $\tau_V$ induces a coarsest matroid subdivision of
  $\Hypersimplex{3}{8}$.
\end{example}

\begin{figure}[htb]
  \includegraphics{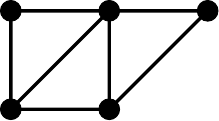}\qquad
  \includegraphics{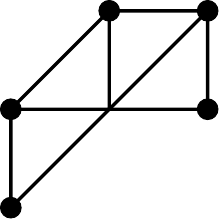}
  \caption{Two tropically rigid configurations of five points in $\TT^2$ without multiple points}
  \label{fig:rigid-special}
\end{figure}

Duplicating points in a tropically rigid point configuration is a general way to create new
tropically rigid point configurations from old ones, as the next result shows.

\begin{proposition}\label{prop:duplicate}
  Let $V\in\RR^{k\times(n-k)}$ be a tropically rigid point configuration, and $w$ any column
  of $V$.  Then the point configuration $V':=(V|w)\in\RR^{k\times(n+1-k)}$ is also tropically
  rigid.
\end{proposition}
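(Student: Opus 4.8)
The plan is to realize the passage from $V$ to $V'=(V\mid w)$ as an instance of the subpolytope situation governed by Proposition~\ref{prop:ts-isom}, with the duplicated point supplying the required tight-span isomorphism. Write $n'=n+1$, let $\Gamma'$ be the regular subdivision of $\Delta_{k-1}\times\Delta_{n'-k-1}=\Delta_{k-1}\times\Delta_{n-k}$ induced by $V'$, and let $\Gamma$ be the subdivision of $\Delta_{k-1}\times\Delta_{n-k-1}$ induced by $V$. First I would identify $P':=\Delta_{k-1}\times\Delta_{n-k-1}$ with the face of $P:=\Delta_{k-1}\times\Delta_{n-k}$ obtained by deleting the vertex of the second simplex factor that corresponds to the appended column $w$. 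Then $P'$ is a subpolytope of $P$, and because $P'$ is a face and $\Gamma'$ is the regular subdivision for the lifting $V'$, restricting to $P'$ simply discards the extra column; hence $\Gamma'|_{P'}=\Gamma$.

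The key step is to show that $\Atightspan{\Gamma'}$ and $\Atightspan{\Gamma'|_{P'}}=\Atightspan{\Gamma}$ are isomorphic. Here I would invoke the Develin--Sturmfels description~\cite{DevelinSturmfels04}: the tight-span of $\Gamma$ is the bounded subcomplex of the type decomposition of $\TT^{k-1}$ determined by the columns of $V$ (the tropical complex of $V$), and likewise $\Atightspan{\Gamma'}$ is that of $V'$. Since $w$ merely repeats a column of $V$, the underlying point set is unchanged, so $\tconv(V')=\tconv(V)$ as a set; moreover, for any $x\in\TT^{k-1}$ the new index joins the $i$-th block of the type of $x$ with respect to $V'$ exactly when the duplicated index does, so the $V'$-type is recovered from the $V$-type by this mirroring and conversely by forgetting the new index. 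This is a bijection between $V$-types and $V'$-types, whence the two type decompositions of $\TT^{k-1}$ coincide as cell complexes, their bounded parts agree, and therefore $\Atightspan{\Gamma'}\cong\Atightspan{\Gamma}$.

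With these two ingredients the conclusion is a formal application of Proposition~\ref{prop:ts-isom}\eqref{prop:ts-isom:coarsest} to $\Sigma=\Gamma'$, $P$, and $P'$: its standing hypothesis $\Atightspan{\Gamma'}\cong\Atightspan{\Gamma'|_{P'}}$ is exactly the key step, and since $\Gamma'|_{P'}=\Gamma$ is a coarsest non-trivial subdivision — which is precisely the assumption that $V$ is tropically rigid — we obtain that $\Gamma'$ is a coarsest subdivision of $P$, i.e.\ that $V'$ is tropically rigid (non-triviality of $\Gamma'$ being inherited through the tight-span isomorphism). I expect the main obstacle to be the key step, and more precisely the bookkeeping needed to verify that duplicating a point leaves the type decomposition, and hence the bounded cell complex, literally unchanged rather than merely combinatorially similar; everything else reduces to the identification $\Gamma'|_{P'}=\Gamma$ and the cited proposition.
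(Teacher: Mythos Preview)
Your proposal is correct and follows essentially the same architecture as the paper: identify $\Delta_{k-1}\times\Delta_{n-k-1}$ with the face $F=\{x_{n+1}=0\}$ of $\Delta_{k-1}\times\Delta_{n-k}$, observe that $\Gamma'|_F=\Gamma$, establish $\Atightspan{\Gamma'}\cong\Atightspan{\Gamma}$, and invoke Proposition~\ref{prop:ts-isom}\eqref{prop:ts-isom:coarsest}. The only difference is in how the tight-span isomorphism is justified---the paper reads off the maximal cells of $\Gamma'$ directly from the lifted polytope (each cell of $\Gamma$ is extended by the vertices $e_\ell+e_{n+1}$ that mirror the duplicated column), whereas you argue dually via the Develin--Sturmfels type decomposition; both are valid and amount to the same observation.
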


\begin{proof}
  Let $\Gamma$ be the subdivision of $\Delta_{k-1}\times\Delta_{n-k-1}$ induced by $V$ and $\Gamma'$
  the subdivision of $\Delta_{k-1}\times\Delta_{n-k}$ induced by $V'$.  Suppose $w$ is the $i$-th
  column of $V$.  By looking at the lifted polytopes, it follows that the maximal cells of $\Gamma'$
  are exactly the polytopes
  \[
  \conv\bigl(\smallSetOf{e_\ell+e_m}{e_\ell+e_m \in C} \cup \smallSetOf{e_\ell+e_{n+1}}{e_\ell+e_i\in C}\bigr) \, ,
  \]
  where $C$ is a maximal cell of $\Gamma$.  In particular, $\Atightspan \Gamma$ and
  $\Atightspan{\Gamma'}$ are isomorphic.  The equation $x_{n+1}=0$ defines a facet $F$ of
  $\Delta_{k-1}\times\Delta_{n-k}$ which is isomorphic to $\Delta_{k-1}\times\Delta_{n-k-1}$.  The
  subdivision $\Gamma'|_F$ of $F$ induced by $\Gamma'$ is isomorphic to $\Gamma$, which is a
  coarsest subdivision by assumption. Hence the tight-spans of $\Gamma$ and $\Gamma'$ agree.
  Applying Proposition~\ref{prop:ts-isom}~\eqref{prop:ts-isom:coarsest} to the subpolytope $F$ of
  $\Delta_{k-1}\times\Delta_{n-k}$ shows that $\Gamma'$ is a coarsest subdivision, too.  This means
  that the point configuration $V'$ is tropically rigid.
\end{proof}

Figure~\ref{fig:trop-rigid} below shows tropically rigid configurations of five points in the plane
which arise from configurations with fewer points by duplicating as described in
Proposition~\ref{prop:duplicate}.

\begin{figure}[hbt]
  \includegraphics{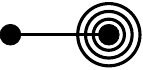}\qquad
  \includegraphics{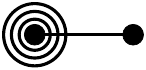}\qquad
  \includegraphics{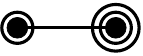}\qquad
  \includegraphics{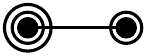}\\[0.5cm]
  \includegraphics{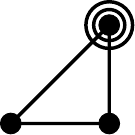}\qquad
  \includegraphics{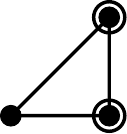}\qquad
  \includegraphics{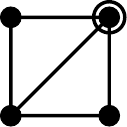}\qquad
  \includegraphics{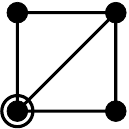}\qquad
  \includegraphics{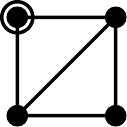}
  \caption{Tropically rigid configurations of five points in~$\TT^2$ with multiple points; multiple
    points circled}
  \label{fig:trop-rigid}
\end{figure}

\begin{remark}
  Up to equivalence, there are exactly eleven tropically rigid configurations of five points in
  $\TT^2$.  The only two configurations without multiple points are shown in
  Figure~\ref{fig:rigid-special}.  In Figure~\ref{fig:trop-rigid} we depict the ones with at least
  one multiple point.  Notice that all four splits in the upper row are pairwise not equivalent as
  point configurations in $\TT^2$.
  
  That this, indeed, is the complete list follows from the classification of the tight-spans of rays
  of $\Dr 3n$ in Example~\ref{ex:DR38-rays} below in connection with
  Theorem~\ref{thm:tight-span}.
\end{remark}

\begin{example}\label{exmp:k=2,cont}
  Continuing our Example~\ref{exmp:k=2} here again we consider the case $k=2$. All rays of $\Dr 2n$
  correspond to splits of $\Hypersimplex 2n$, and these also give the only tropically rigid point
  configurations (without duplicates) in this case.

  The $(n{-}3)$-dimensional permutahedron is a secondary polytope of $\Delta_1\times\Delta_{n-3}$;
  see \cite[\S6.2.1]{Triangulations}.  That is, its normal fan is the secondary fan of that product
  of simplices.  This secondary fan embeds into the Dressian $\Dr 2n$, which coincides with $\Gr
  2n$, as described in Corollary~\ref{cor:embedding}.  For $n=5$ the Dressian $\Dr 25$ (as a
  spherical polytopal complex) is isomorphic to the Petersen graph (considered as a $1$-dimensional
  spherical polytopal complex).  The Dressian $\Dr 25$ is also the graph complement of the the
  vertex-edge graph of $\Hypersimplex 25$, and thus we can label its vertices with $2$-element
  subsets of the set $\{1,2,3,4,5\}$.  This comes about as each ray of the secondary fan of
  $\Hypersimplex 25$ is a vertex split; for instance, this follows from the classification of
  hypersimplex splits \cite[\S5]{MR2502496}.  The $2$-dimensional permutahedron (as well as its
  dual) is a hexagon.  The embedding with respect to the vertex $e_{12}$ is shown in
  Figure~\ref{fig:Dr25}.
\end{example}

\begin{figure}[hbt]
  \includegraphics[scale=1]{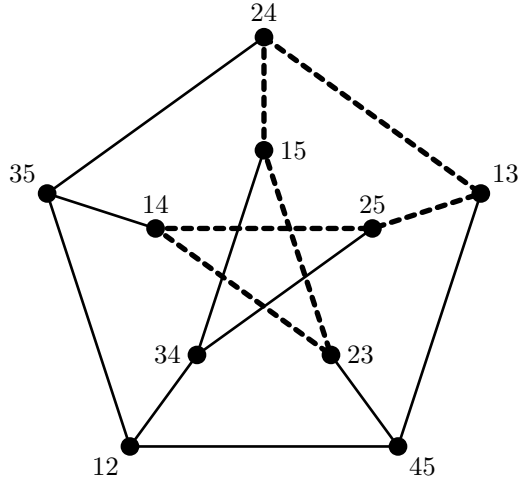}
  \caption{$\Dr 25$ with embedded hexagon}
  \label{fig:Dr25}
\end{figure}

\section{Combinatorial Properties of Coarsest Matroid Subdivisions}

For the relevant definitions and basic properties of matroids we refer to White
\cite{White86,White92}.  Let $\cM$ be a matroid on the set $[n]$.  Two elements $i,j\in[n]$ are said
to be \emph{equivalent} if there exists a circuit $C$ of $\cM$ with $i,j\in C$. The equivalence
classes of this relation are the \emph{connected components} of $\cM$. We denote by $c(\cM)$ the
number of connected components of $\cM$. A matroid is called \emph{connected} if it has
$c(\cM)=1$. In fact, there is a relation between the number of connected components of a matroid and
the dimension of its matroid polytope.

\begin{proposition}[{Feichtner and Sturmfels~\cite[Prop.~2.4]{MR2191630}}]\label{prop:dimension}
  Let $\cM$ be a matroid with $n$ elements. Then the
  dimension of the matroid polytope of $\cM$ equals $n-c(\cM)$.
\end{proposition}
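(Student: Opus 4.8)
The plan is to pin down the affine hull of the matroid polytope $P_\cM = \conv\{e_B : B \text{ a basis of } \cM\}$ by producing exactly $c(\cM)$ independent linear equations satisfied by its vertices. Let $E_1,\dots,E_{c(\cM)}$ be the connected components of $\cM$ and let $r_t$ be the rank of the restriction $\cM|_{E_t}$. Since the components yield a direct-sum decomposition $\cM=\bigoplus_t \cM|_{E_t}$, every basis $B$ meets each $E_t$ in a basis of $\cM|_{E_t}$; hence each vertex $e_B$ satisfies $\sum_{i\in E_t}x_i = r_t$ for every $t$. As the supports $E_t$ partition $[n]$, these $c(\cM)$ equations are linearly independent, so $\dim P_\cM \le n-c(\cM)$.

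For the reverse inequality I would show that any affine equation $\sum_i a_i x_i = b$ valid on all vertices lies in the span of these $c(\cM)$ equations, i.e.\ that $a$ is constant on each component $E_t$. The engine is a single basis exchange: if $B$ is a basis, $f\in B$, $e\notin B$, and $B-f+e$ is again a basis, then evaluating the equation at $e_B$ and at $e_{B-f+e}$ and subtracting yields $a_e=a_f$.

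It therefore suffices to produce such an exchange for any two elements $e,f$ lying in one component. Here I use the defining property of components (as in the text): within a connected component any two elements lie on a common circuit $C$. Extend the independent set $C-e$ to a basis $B$; then $f\in B$ and $e\notin B$, and the point is that $B-f+e$ is again a basis. This is the crux: were $B-f+e$ to contain a circuit $C'$, it would have to use $e$, so $C$ and $C'$ share $e$, and circuit elimination would produce a circuit inside $(C-e)\cup(C'-e)\subseteq B$, contradicting the independence of $B$. This gives $a_e=a_f$ for all $e,f$ in one component, so $a$ is constant on each $E_t$; hence the affine hull of $P_\cM$ is cut out precisely by the $c(\cM)$ equations above and $\dim P_\cM = n-c(\cM)$.

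The only genuine obstacle is this exchange lemma; everything else is bookkeeping. Equivalently, one may route the argument through the direct-sum decomposition, under which $P_\cM$ is the Cartesian product of the polytopes $P_{\cM|_{E_t}}$, reducing the statement to the full-dimensionality $\dim P_{\cM'} = \#E'-1$ of the polytope of a connected matroid $\cM'$ on ground set $E'$, again proved by the same exchange step.
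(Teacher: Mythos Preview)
The paper does not give its own proof of this proposition; it is quoted with a citation to Feichtner and Sturmfels and used as a known input. Your argument is a correct and standard direct proof of the result.

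The exchange step, which you rightly identify as the crux, is sound: with $C$ a circuit through $e$ and $f$ and $B\supseteq C\setminus\{e\}$ a basis, one has $e\notin B$ (else $C\subseteq B$), and if $B-f+e$ contained a circuit $C'$ then $e\in C'$ while $f\notin C'$, so $C'\neq C$, and weak circuit elimination gives a circuit inside $(C\cup C')\setminus\{e\}\subseteq B$, a contradiction. One cosmetic point: a coloop lies in no circuit, so the paper's relation must be read with its reflexive closure to make each coloop a singleton component; this does not affect your argument, since on a singleton component the constancy of $a$ is vacuous and the corresponding equation $x_e=r_t$ (with $r_t\in\{0,1\}$) is already among your $c(\cM)$ equations.
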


If a connected component of $\cM$ has cardinality $1$, it is called \emph{trivial} and its unique
element is a \emph{loop} of $\cM$. Since bases are maximal with the property of not containing a
circuit, it follows that a basis of $\cM$ has to contain at least one element from each non-trivial
connected component of $\cM$. This says that the number of non-trivial connected components of a
matroid is bounded by its rank.
 
The restriction of $\cM$ to each of its connected components $C_1,C_2,\dots,C_\nu$ is a matroid
$\cM(C_i)$ with ground set $C_i$ and, obviously, one has
\[
\sum_{i=1}^{\nu} \rank \cM(C_\nu)=\rank \cM.
\]
This is also true for matroids with trivial components, as a trivial component has rank $0$.
 
Let now $\Sigma$ be a matroid subdivision of $\Hypersimplex kn$ and $F\in\Sigma$ some interior cell.
Then there exists a rank-$k$ matroid $\cM$ with $n$ elements such that the matroid polytope of $\cM$
is $F$.  Furthermore ($F$ being interior) $\cM$ does not contain any loops so it has at most $k$
connected components.  Since the number of connected components of a loop-free matroid is at most
the rank, translating Proposition~\ref{prop:dimension} to the language of tight-spans gives us the
following:

\begin{lemma}\label{lem:dimension}
  Tight-spans of matroid subdivisions of $\Hypersimplex kn$ are at most $(k{-}1)$-dimensional.
\end{lemma}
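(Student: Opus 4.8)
The plan is to translate the bound on the number of connected components, already recorded in the paragraph preceding the lemma, into a lower bound on the dimension of every interior cell, and then convert this into an upper bound on the dimension of the tight-span by keeping track of the rank conventions.

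First I would recall that $\Hypersimplex kn$ lies in the affine hyperplane $\sum_i x_i = k$ of $\RR^n$ and hence has dimension $m = n-1$. By the definition of the tight-span, an interior cell $F$ of a matroid subdivision $\Sigma$ of dimension $\dim F = d$ corresponds to a poset element of rank $\ell+1$ with $m-\ell = d$, that is, to a cell of dimension $\ell = m-d = (n-1)-d$ in the cell complex $\Atightspan{\Sigma}$. Consequently $\dim \Atightspan{\Sigma} = (n-1) - d_{\min}$, where $d_{\min}$ denotes the smallest dimension occurring among the interior cells of $\Sigma$. It therefore suffices to prove that $d_{\min} \ge n-k$.

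To bound $d_{\min}$ from below I would take an arbitrary interior cell $F$ and use that $F$ is the matroid polytope of a loop-free matroid $\cM$ of rank $k$ on the ground set $[n]$. Since a loop-free matroid has no trivial connected components, its total number of connected components equals its number of non-trivial components, which is bounded above by the rank; hence $c(\cM) \le k$. Now Proposition~\ref{prop:dimension} gives $\dim F = n - c(\cM) \ge n - k$. As $F$ was an arbitrary interior cell, this yields $d_{\min} \ge n-k$, and combining with the previous paragraph produces $\dim \Atightspan{\Sigma} \le (n-1) - (n-k) = k-1$, as claimed.

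There is no serious obstacle here, since every ingredient has been prepared in the discussion above; the lemma is essentially a reformulation of Proposition~\ref{prop:dimension} in the language of tight-spans. The only point demanding care is the bookkeeping between the dimension of a cell of $\Sigma$ and the dimension of the corresponding cell of the dual complex $\Atightspan{\Sigma}$: small interior cells of the subdivision correspond to \emph{large} cells of the tight-span, so it is precisely the lower bound $n-k$ on interior-cell dimensions that yields the upper bound $k-1$ on the dimension of the tight-span.
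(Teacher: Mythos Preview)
Your argument is correct and follows exactly the approach of the paper: the text immediately preceding the lemma already notes that an interior cell corresponds to a loop-free rank-$k$ matroid, hence has at most $k$ connected components, and states that the lemma is obtained by ``translating Proposition~\ref{prop:dimension} to the language of tight-spans''. Your write-up simply makes this translation explicit by carrying out the dimension bookkeeping $(n-1)-d_{\min}\le k-1$.
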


In particular, if $F$ has codimension $k-1$, $\cM$ has $k$ connected components $C_1,C_2,\dots,C_k$ and
the bases of $\cM$ are the sets containing exactly one element from each $C_i$. Otherwise stated,
$\cM$ is the product of $k$ rank-$1$ matroids. If $F$ has codimension $1$, its matroid $\cM$ has two
connected components $C_1,C_2$, $F$ lies in the hyperplane $\sum_{i\in C_1}x_i=\rank \cM(C_1)=:l$,
and $\cM$ is the product of a rank-$l$ and a rank-$(k-l)$ matroid.

We will now examine the rays of $\Dr kn$. By Lemma~\ref{lem:dimension}, these correspond to
$k$-dimensional tropical linear spaces. However, in this least generic case we also have to deal
with lower dimensional degenerations up to embeddings of tropical lines; these arise as
degenerations of proper (tropical) planes.

\begin{proposition}[{\cite[Prop.~3.4]{MR2515769}}]
  Each split of the hypersimplex $\Delta(k,n)$ gives a ray of the Dressian $\Dr kn$.
\end{proposition}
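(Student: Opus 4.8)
The plan is to establish two things about a split $\Sigma$ of $\Hypersimplex kn$: that it is a point of $\Dr kn$ at all, and that its secondary cone is as small as a non-trivial cone can be, namely a ray.

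First I would check that a split is a matroid subdivision, so that it actually lies in the Dressian. By definition a split has precisely two maximal cells, obtained by intersecting $\Hypersimplex kn$ with the two closed halfspaces bounded by the splitting hyperplane. One verifies that these two subpolytopes are $(k,n)$-matroid polytopes, equivalently (via Proposition~\ref{prop:matroid_decomposition}) that the split weight function is a finite tropical Pl\"ucker vector; this is exactly the content of \cite[Prop.~5.2]{MR2502496}. Consequently the split determines a point of $\Dr kn$, and its secondary cone is one of the cones of the Dressian regarded as a subfan of the secondary fan of $\Hypersimplex kn$.

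Second I would invoke the standard correspondence between the faces of the secondary cone $C(\Sigma)$ of a regular subdivision and the coarsenings of $\Sigma$ (see \cite[\S5.2]{Triangulations}): modulo the lineality space, the proper faces of $C(\Sigma)$ are precisely the secondary cones of the non-trivial proper coarsenings of $\Sigma$, with the lineality space itself corresponding to the trivial subdivision. Now a split has exactly two maximal cells, so the only way to coarsen it is to merge these two cells, which yields the trivial subdivision. Hence $C(\Sigma)$ has no proper face other than the lineality space; a cone with this property is one-dimensional modulo lineality, which is exactly the statement that $\Sigma$ spans a ray of the secondary fan. Since $\Dr kn$ is a subfan of the secondary fan and $C(\Sigma)$ is contained in it, this ray is a ray of $\Dr kn$.

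The main obstacle is the first step: verifying that both halves of a split are genuinely matroid polytopes, which is where the specific combinatorics of the hypersimplex enters and which I would simply quote from \cite[Prop.~5.2]{MR2502496}. Once that is in hand the second step is purely formal, relying only on the face--coarsening dictionary for secondary cones together with the trivial observation that a subdivision into two cells admits no non-trivial coarsening.
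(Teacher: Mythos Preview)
Your proposal is correct and follows essentially the same approach as the paper. The paper's justification is a single sentence --- a split is a regular matroid subdivision with precisely two maximal cells, hence coarsest --- and your two steps (citing \cite[Prop.~5.2]{MR2502496} for the matroid property, then observing that two maximal cells leave no room for a non-trivial coarsening) are exactly that argument spelled out with the secondary-cone dictionary made explicit.
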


This holds true since such a split is a regular matroid subdivision with precisely two maximal
cells; so this must be a coarsest subdivision.  Via tropical rigidity the same result can also be
obtained as follows: Applying Proposition~\ref{prop:duplicate} to Example~\ref{exmp:splits} gives us
a procedure to generate splits of any hypersimplex.  In fact, all hypersimplex splits arise via
picking a vertex (figure), the parameter $\ell$ (to determine the point $p_\ell$), and the
duplication pattern.  By \cite[Thm.~5.3]{MR2502496} the total number of 
splits of the hypersimplex $\Hypersimplex{k}{n}$ with $n\ge 2k$ equals
\begin{equation}
  \label{eq:n_splits}
  (k-1) \left(2^{n-1}-(n+1)\right)-\sum_{i=2}^{k-1}(k-i)\binom{n}{i} \, .
\end{equation}

Further known rays of $\Dr kn$ are the $3$-splits (subdivisions whose tight-span is a triangle) as
shown in \cite[Thm.~6.5]{Herrmann09}. Keeping the same notation as above, we now specialize to the
case $k=3$.  This means that we are looking at tropical point configurations in $\TT^2$ and at the
induced tropical planes in $\TT^{n-1}$.  For the number of splits of $\Delta(3,n)$ with $n\ge 6$ the
formula~\eqref{eq:n_splits} reads
\[
  (3-1) \left(2^{n-1}-(n+1)\right)-\sum_{i=2}^{3-1}(3-i)\binom{n}{i} \ = \
  2^n - \frac{1}{2} \left(n^2+3n+4\right) \, .
\]

The following is readily implied by Corollary~\ref{cor:embedding}.

\begin{corollary}\label{cor:trop-rigid-ray}
  Any tropically rigid configuration $V$ of $n$ points in $\TT^{k-1}$ gives rise to a ray of the
  Dressian $\Dr kn$, which is also a ray of the tropical Grassmannian $\Gr kn$, such that the
  tight-span of the ray coincides with the tropical complex of $V$.
\end{corollary}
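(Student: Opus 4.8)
The plan is to unwind the definition of tropical rigidity and then transport it through the embedding of Corollary~\ref{cor:embedding}. By definition, a tropically rigid configuration $V$ induces a coarsest non-trivial regular subdivision $\Gamma$ of $\Delta_{k-1}\times\Delta_{n-k-1}$; equivalently, $V$ spans a ray of the secondary fan $\frS$ modulo its lineality. Let $\Sigma$ be the regular matroid subdivision of $\Delta(k,n)$ induced by $\tau_V$, which is precisely the subdivision produced in Theorem~\ref{thm:tight-span}. The task then splits into three parts: showing that $\Sigma$ is again coarsest (hence a ray of $\Dr kn$), that it lies in $\Gr kn$, and that $\Atightspan{\Sigma}$ is the tropical complex of $V$.

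The core step is coarsestness of $\Sigma$. For this I would apply Proposition~\ref{prop:ts-isom} with $P=\Delta(k,n)$ and $P'$ the subpolytope spanned by the neighbors of the vertex $e_{[k]}$. One checks that $P'=\Delta(k,n)\cap e_{[k]}^\perp$ is a model of the vertex figure, so that $\Sigma|_{P'}=\Sigma\cap e_{[k]}^\perp=\iota(\Gamma)$ by Theorem~\ref{thm:tight-span}\eqref{it:tight-span:equal}, and $\Atightspan{\Sigma}\cong\Atightspan{\Sigma|_{P'}}$ by Theorem~\ref{thm:tight-span}\eqref{it:tight-span:iso}. This verifies the hypothesis of Proposition~\ref{prop:ts-isom}. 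Since $\Gamma$, and therefore $\Sigma|_{P'}$, is coarsest by rigidity, part~\eqref{prop:ts-isom:coarsest} of that proposition forces $\Sigma$ to be a coarsest matroid subdivision, i.e.\ a ray of $\Dr kn$.

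The two remaining claims follow quickly. The ray is contained in $\Gr kn$ by the second assertion of Corollary~\ref{cor:embedding} (equivalently Proposition~\ref{prop:in-Grassmannian}); being already coarsest among all matroid subdivisions, it admits no coarsening inside the smaller set $\Gr kn$ either, and is hence also a ray of $\Gr kn$. Finally, the tight-span of the ray is $\Atightspan{\Sigma}\cong\Atightspan{\Gamma}$, which by the definition in Section~\ref{sec:trop-rid} is exactly the tropical complex $\tconv(V)$ with its canonical decomposition by type.

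I expect the subtle point to be the coarsestness step rather than any of the others. Because $\tau$ is only piecewise-linear, it is not formally automatic that it carries a ray of $\frS$ onto a one-dimensional cone of $\Dr kn$; a bare dimension count does not rule out the image sitting in the relative interior of a larger cone. What genuinely does the work is the combinatorial control of coarsenings supplied by the tight-span isomorphism of Theorem~\ref{thm:tight-span} together with Proposition~\ref{prop:ts-isom}, which certifies that $\Sigma$ admits no non-trivial coarsening.
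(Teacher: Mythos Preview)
Your argument is correct and is precisely an unpacking of what the paper leaves implicit; the paper states only that the corollary is ``readily implied by Corollary~\ref{cor:embedding}'', and your route through Theorem~\ref{thm:tight-span} together with Proposition~\ref{prop:ts-isom}\eqref{prop:ts-isom:coarsest} to certify coarsestness of $\Sigma$ is the natural one (and is the same mechanism the paper itself uses in the parallel Proposition~\ref{prop:duplicate}). The only place to be slightly careful is the clause ``also a ray of $\Gr kn$'': your coarsening argument pins down the Dressian's secondary-fan structure rather than the Gr\"obner-fan structure on $\Gr kn$, but the paper does not address this distinction either, and the intended content is simply that the ray lies in $\Gr kn$, which you have via Proposition~\ref{prop:in-Grassmannian}.
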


\begin{corollary}\label{cor:trop-rigid}
  For $n\ge 2k$ there are at least $T(n-k,k)$ combinatorially distinct rays of $\Dr kn$ and $\Gr kn$.
\end{corollary}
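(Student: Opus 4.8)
The plan is to read the bound off from Corollary~\ref{cor:trop-rigid-ray}. With $T(n-k,k)$ denoting the number of combinatorial types of tropically rigid configurations of $n-k$ points in $\TT^{k-1}$ --- two configurations being of the same type exactly when their tropical complexes are isomorphic --- the assertion becomes that each such type persists as a combinatorially distinct ray in both $\Dr kn$ and $\Gr kn$.

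First I would fix one representative $V_1,\dots,V_{T(n-k,k)}$ for each type. Their existence, and the fact that there are at least $T(n-k,k)$ of them, is furnished by the constructions preceding the corollary: the base configurations of Examples~\ref{exmp:splits} and~\ref{ex:trop-rigid} together with the duplication procedure of Proposition~\ref{prop:duplicate}, which enlarges the support of a rigid configuration while keeping its tight-span under control. Applying Corollary~\ref{cor:trop-rigid-ray} to each $V_i$ then yields a ray $r_i$ of $\Dr kn$ that is simultaneously a ray of $\Gr kn$ and whose tight-span coincides with the tropical complex $\tconv(V_i)$.

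It remains to check that $r_1,\dots,r_{T(n-k,k)}$ are pairwise combinatorially distinct, and here Corollary~\ref{cor:trop-rigid-ray} does the essential work: since the tight-span of $r_i$ equals the tropical complex of $V_i$, two rays coming from configurations of different type have non-isomorphic tight-spans and are therefore combinatorially distinct. As the $V_i$ were chosen from pairwise distinct types, all $T(n-k,k)$ rays are distinct, and each is a ray of $\Gr kn$ as well, which establishes the lower bound for both fans. The bound is phrased as an inequality because there may be further rays that do not arise from tropically rigid point configurations and hence are not counted here.

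I expect the genuine difficulty to lie not in the passage from configurations to rays --- Corollary~\ref{cor:trop-rigid-ray} makes this transparent and even records the tight-span --- but in the combinatorial bookkeeping behind the number $T(n-k,k)$ itself: one must ensure that the configurations built from the base examples by iterated duplication realize $T(n-k,k)$ pairwise non-isomorphic tropical complexes, with no two inequivalent constructions accidentally producing isomorphic tight-spans and thereby deflating the count.
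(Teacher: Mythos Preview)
Your proposal rests on a misreading of $T(n-k,k)$. In the paper this symbol is defined, in the sentence immediately following the corollary, as the number of partitions of $n-k$ into $k$ positive parts; it is not the number of combinatorial types of tropically rigid configurations. With your interpretation the corollary would be essentially a restatement of Corollary~\ref{cor:trop-rigid-ray}, and your ``proof'' would have no content beyond invoking that result --- which is indeed what you end up doing, while deferring all the actual work to the unproved claim that the preceding examples furnish $T(n-k,k)$ non-isomorphic types.

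With the correct meaning of $T(n-k,k)$ the paper's argument is much more specific than yours. It uses a \emph{single} base configuration, the $k$-point configuration of Example~\ref{ex:trop-rigid} whose tropical complex is a $(k{-}1)$-simplex, and then distributes $n-2k$ additional duplicated points among the $k$ simplex vertices via Proposition~\ref{prop:duplicate}. Up to the $\Sym(k)$-action permuting the vertices, the number of inequivalent ways to do this is exactly the number of partitions of $n-k$ into $k$ positive parts, i.e.\ $T(n-k,k)$. Each such configuration is tropically rigid by Proposition~\ref{prop:duplicate}, and Corollary~\ref{cor:trop-rigid-ray} turns it into a ray of both $\Dr kn$ and $\Gr kn$.

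Note in particular that your proposed mechanism for distinguishing rays --- comparing tight-spans --- would fail for this family: by Proposition~\ref{prop:duplicate} the tight-span is unchanged by duplication, so all of these configurations have the same abstract tight-span, a $(k{-}1)$-simplex. What separates the resulting rays is not the tight-span but the multiplicity pattern, which yields inequivalent matroid subdivisions of $\Delta(k,n)$.
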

Here $T(m,k)$ is the number of partitions of $m$ into $k$ positive parts, which is the same as the
number of partitions of $m$ in which the greatest part is $k$; see Integer Sequence: A008284 \cite{IntSeq}. We
have the recursion
\[
T(m, k) \ = \ \sum_{i=1}^k T(m-k, i)
\]
with $T(m, m)=1$.

\begin{proof}
  Consider the tropical complex $\Gamma$ from Example~\ref{ex:trop-rigid} which defines a $(k-1)$-dimensional
  simplex. We can now distribute $n-2k$ additional points arbitrarily among the $k$ vertices of $\Gamma$, creating
  multiple points this way. 
  Two such configurations are equivalent under the action of the group $\Sym(k)$ if and only if they
  correspond to the same partition of $n-k$ into $k$ positive parts. By Proposition~\ref{prop:duplicate} the new
  configuration is tropically rigid and, by Corollary~\ref{cor:trop-rigid-ray}, corresponds to a ray of the
  $\Dr kn$ and $\Gr kn$.
\end{proof}

\begin{remark}
  For $n\ge 6$ we have
  \[
  T(n-3,3) \ = \ \lfloor 1/12 (n-3)^2 + 1/2 \rfloor \, ;
  \]
  see Integer Sequence A001399 \cite{IntSeq}.  This establishes a quadratic lower bound for the number of
  combinatorially distinct configurations of $n-3$ points in $\TT^2$ which are tropically rigid.
\end{remark}

\section{Most Degenerate Tropical Planes}
\label{sec:planes}

We will now concentrate on the case $k=3$ and discuss the coarsest matroid subdivisions of the
hypersimplices $\Delta(3,n)$.  These are precisely the rays of the Dressian $\Dr 3n$.  Generically,
these correspond to tropical planes.  In view of Lemma~\ref{lem:dimension}, we can give the
following restriction on the tight-span of a coarsest matroid subdivision of $\Dr 3n$.

\begin{proposition}\label{prop:dimension-3}
  Let $\subdiv$ be a coarsest matroid subdivision of $\Dr 3n$. Then $\Atightspan \Sigma$ is
  either a line segment or pure two-dimensional.
\end{proposition}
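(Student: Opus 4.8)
The plan is to translate the word \emph{coarsest} into a statement about the dimension of the secondary cone of $\Sigma$ and then to read that dimension off from the combinatorics of $\Atightspan{\Sigma}$. Since the Dressian is a subfan of the secondary fan, any coarsening of a matroid subdivision is again a matroid subdivision; hence $\Sigma$ is a coarsest matroid subdivision exactly when its secondary cone, taken modulo the lineality space, is a ray. Two structural facts are already available: by Lemma~\ref{lem:dimension} the complex $\Atightspan{\Sigma}$ has dimension at most $2$, and, as the tight-span of a subdivision of a polytope, it is contractible. Its dimension is not $0$ (that would be the trivial subdivision), so it remains to show that a $1$-dimensional $\Atightspan{\Sigma}$ must be a line segment while a $2$-dimensional one must be pure.

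The tool I would use is the standard description of the secondary cone by wall-crossing data; see \cite[Cor.~5.2.7]{Triangulations}. Modulo lineality, a weight function inducing $\Sigma$ is recorded by its \emph{bending} $c_W\in\RR$ across each interior codimension-$1$ cell $W$, i.e. across each edge of $\Atightspan{\Sigma}$; the weight function of $\Sigma$ itself has all $c_W>0$. These bendings are constrained only by the linear relations expressing that the weight function is single-valued around each loop in the adjacency graph of the maximal cells, equivalently in the $1$-skeleton of $\Atightspan{\Sigma}$. The key observation is that an edge $W$ lying in no $2$-face of $\Atightspan{\Sigma}$ contributes a completely free bending direction: if $W$ lay on a loop $\gamma$, then by contractibility $\gamma=\partial c$ for some $2$-chain $c$, and since $W$ occurs in no $2$-face it has coefficient $0$ in $\partial c$, contradicting its coefficient $\pm1$ in $\gamma$. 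Thus such a $W$ is a bridge and its bending enters none of the relations, so it can be varied freely without disturbing any other wall.

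With this dictionary both cases are immediate. If $\Atightspan{\Sigma}$ is $1$-dimensional it is a tree, so it has no $2$-faces and every edge is a bridge; the bendings inducing $\Sigma$ then fill the positive orthant of $\RR^{E}$, where $E$ is the number of edges, and being a ray forces $E=1$, i.e. a line segment. If $\Atightspan{\Sigma}$ is $2$-dimensional but not pure, it has a maximal edge $W$ lying in no $2$-face, which is therefore a bridge, so $c^{*}+\varepsilon e_W$ still induces $\Sigma$ for small $\varepsilon>0$, where $c^{*}$ is the bending of $\Sigma$ and $e_W$ denotes the standard basis vector of $\RR^{E}$ indexed by $W$. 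Because the complex is genuinely $2$-dimensional, some wall $W_0$ lies in a $2$-face and hence satisfies $c^{*}_{W_0}>0$, while $(e_W)_{W_0}=0$; thus $c^{*}$ and $e_W$ are linearly independent admissible directions and the secondary cone has dimension at least $2$, contradicting that $\Sigma$ is coarsest. Hence a $2$-dimensional tight-span is pure, which completes the dichotomy.

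The main obstacle is the middle paragraph: setting up the bending-versus-cycle dictionary rigorously, that is, verifying that modulo lineality the local deformations of $\Sigma$ inside the secondary fan are parametrized by the wall bendings with relations indexed precisely by the interior codimension-$2$ cells, and that flattening a single bridge wall keeps all remaining walls intact, so that the perturbed weight function induces $\Sigma$ rather than a proper coarsening. Once this is in place, the homological argument that bridges carry no relation and the two short dimension counts are routine. A preliminary check I would also make is that $\Atightspan{\Sigma}$ has no isolated vertices: the dual graph of a non-trivial subdivision is connected with at least two maximal cells, so every maximal cell shares an interior wall with a neighbour, and therefore the only non-pure behaviour to exclude in dimension $2$ is a maximal edge, exactly the bridge case treated above.
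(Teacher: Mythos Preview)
Your argument is correct but takes a different route from the paper. The paper's proof is two lines: after noting $\dim \Atightspan{\Sigma}\le 2$ from Lemma~\ref{lem:dimension}, it observes that a split has a line segment as tight-span and otherwise invokes \cite[Cor.~5.3]{Herrmann09}, which says that every maximal face of the tight-span of a coarsest non-split subdivision has dimension at least two. Your approach instead unpacks the secondary cone directly via wall-bendings and uses contractibility of $\Atightspan{\Sigma}$ to show that an edge lying in no $2$-face contributes an independent deformation direction. This is more work but self-contained, avoiding the external reference. The technical point you flag as the main obstacle does hold: the link of any face of $\Sigma$ is connected, so agreement of the piecewise-affine lift across walls automatically forces agreement on all lower-dimensional intersections; hence the only relations on the bendings are the cycle relations, and by $H_1(\Atightspan{\Sigma})=0$ these are generated by the elementary cycles around interior codimension-$2$ cells. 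Thus an edge $W$ in no $2$-face has $e_W$ in the bending space, and your two dimension counts go through. (A minor simplification: you do not need the separate homological argument that $W$ is a bridge---it suffices that $c_W$ appears in no elementary relation.)
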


\begin{proof}
  By Lemma~\ref{lem:dimension}, $\Atightspan \Sigma$ is at most two-dimensional. If $\subdiv$
  is a split,  $\Atightspan \Sigma$ is a line segment; otherwise,
  \cite[Cor.~5.3]{Herrmann09} implies that all maximal faces of $\Atightspan \Sigma$ are
  at least two-dimensional. This shows the claim.
\end{proof}

Once again specializing our discussion in the previous section to the case $k=3$, it follows that a
matroid $\cM$ corresponding to a codimension-$1$ cell of a matroid subdivision of $\Hypersimplex 3n$
is the product of a rank-$2$ matroid and a rank-$1$ matroid. So, in particular, $\cM$ is realizable
over any infinite field $\KK$. 

The matroid $\cM(C)$ of a codimension-$2$ cell $C$ is even simpler.  Such a matroid must be a rank
$3$ matroid with no loops and three connected components.  Let the three components be $\{ \alpha,
\beta, \gamma \}$, with $[n] = \alpha \sqcup \beta \sqcup \gamma$.  So the bases of $\cM(c)$ are the
triples $(i,j, \ell)$ with $i \in \alpha$, $j \in \beta$, $\ell \in \gamma$.  Every codimension-$1$
cell $F$ containing~$C$ lies in one of the three hyperplanes $H_{\alpha}$, $H_{\beta}$ or
$H_{\gamma}$.  So $C$ is contained in at most six codimension-$1$ cells (each hyperplane is divided
in half by $C$, and can contribute a cell from each side).

Let $S$ be the two-dimensional cell of the tight-span dual to $C$. So $S$ has at most $6$ sides.  If
the subdivision of $\Delta(3,n)$ is regular, so that $S$ comes with an embedding into
$\mathbb{R}^n/\mathbb{R} \cdot (1,\ldots, 1)$, then the edges of $C$ are orthogonal to $H_{\alpha}$,
$H_{\beta}$ and $H_{\gamma}$, meaning that they are parallel to the vectors $\sum_{i \in \alpha}
e_i$, $\sum_{i \in \beta} e_i$ and $\sum_{i \in \gamma} e_i$. Note that, in $\mathbb{R}^n/\mathbb{R}
\cdot (1,\ldots, 1)$, these three vectors sum to $0$.  So $C$ is either a triangle, a parallelogram,
a trapezoid, a pentagon or a hexagon. (See Figure~\ref{fig:xgons}.)  In fact, for any rank $k$,
these are the only possible $2$-faces in the tight-span of a tropical linear
space. See~\cite[Prop. 2.6]{TLS}.

\begin{figure}[hbt]
  \includegraphics[scale=0.9]{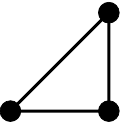}\qquad
  \includegraphics[scale=0.9]{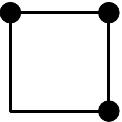}\qquad
  \includegraphics[scale=0.9]{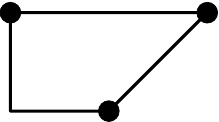}\qquad
  \includegraphics[scale=0.9]{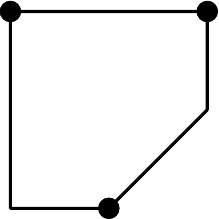}\qquad
  \includegraphics[scale=0.9]{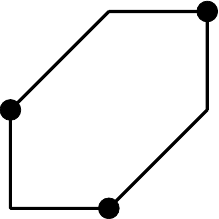}
  \caption{Shapes that might appear in the tight-span of a matroid subdivision; these precisely are the two-dimensional polytropes~\cite{JK10}}
  \label{fig:xgons}
\end{figure}

The vertices of the tight-span are in bijection with the maximal cells of the matroid subdivision. 
When we coarsen a matroid subdivision, adjacent cells merge into larger cells; the dual effect is that edges of the tight-span contract down to length $0$.
We will say that such an edge \emph{collapses}.
We make the following observations:

\begin{proposition} \label{CollapseTriangle}
If one edge of a triangle collapses, then all the edges of that triangle collapse.
\end{proposition}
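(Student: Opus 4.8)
The plan is to exploit the metric realization of the tight-span, which is available because every subdivision occurring as a (coarsening of a) ray of $\Dr 3n$ is regular and hence $\Atightspan{\Sigma}$ embeds into $\RR^n/\RR\vones$. Let the given triangle $S$ be the $2$-cell of $\Atightspan{\Sigma}$ dual to a codimension-$2$ cell $C$, whose matroid has the three connected components $\alpha,\beta,\gamma$ with $[n]=\alpha\sqcup\beta\sqcup\gamma$. Write $d_\alpha=\sum_{i\in\alpha}e_i$, and similarly $d_\beta,d_\gamma$. In $\RR^n/\RR\vones$ these three vectors sum to $d_{[n]}=\vones\equiv 0$, and any two of them are linearly independent because they have disjoint non-empty supports (if $a\,d_\alpha+b\,d_\beta\equiv c\vones$ in $\RR^n$, comparing coordinates in $\alpha$, in $\beta$, and in the non-empty set $\gamma$ forces $a=b=c=0$). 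As recalled in the paragraph preceding the statement, the three edges of $S$ are parallel to $d_\alpha$, $d_\beta$, $d_\gamma$, one edge to each direction.

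The first step is to record the closing-up relation for the triangle. Orienting the boundary of $S$, the three consecutive edge vectors are $s_\alpha d_\alpha$, $s_\beta d_\beta$, $s_\gamma d_\gamma$ for signed scalars $s_\alpha,s_\beta,s_\gamma$, and since the boundary closes up we have $s_\alpha d_\alpha+s_\beta d_\beta+s_\gamma d_\gamma=0$. Substituting $d_\gamma=-(d_\alpha+d_\beta)$ yields $(s_\alpha-s_\gamma)d_\alpha+(s_\beta-s_\gamma)d_\beta=0$, and the independence of $d_\alpha,d_\beta$ forces $s_\alpha=s_\beta=s_\gamma=:s$. Thus a single scalar $s$ governs all three edges simultaneously: edge $E_\alpha$ has Euclidean length $\abs{s}\cdot\norm{d_\alpha}$, and likewise for $\beta,\gamma$, where $\norm{d_\alpha}=\sqrt{\abs{\alpha}}$, etc., are all nonzero. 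Hence the three edge lengths are all positive (when $s\neq 0$) or all zero (when $s=0$); there is no intermediate regime in which one edge degenerates while another survives.

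The second step passes to the coarsening. Collapsing an edge means moving from the open secondary cone of $\Sigma$ to a face of it, along which the edge lengths vary linearly and the common scalar $s$ is a linear functional of the lifting. If one edge, say $E_\gamma$, collapses, then $\abs{s}\cdot\norm{d_\gamma}\to 0$, so $s\to 0$; but then $\abs{s}\cdot\norm{d_\alpha}$ and $\abs{s}\cdot\norm{d_\beta}$ vanish as well, so $E_\alpha$ and $E_\beta$ collapse too, which is the claim. A purely combinatorial variant sidesteps the metric: if only $E_\gamma$ collapsed, the $2$-cell of the coarsened tight-span dual to $C$ would be a bigon, with its two surviving edges parallel to the independent directions $d_\alpha$ and $d_\beta$; but a closed polygon cannot have two edges in linearly independent directions (the closing-up relation $s_\alpha d_\alpha+s_\beta d_\beta=0$ would force both lengths to $0$), and in any case the admissible $2$-cells of a matroid tight-span are exactly the polytropes of Figure~\ref{fig:xgons}, none of which is a bigon. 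I expect the only (very mild) obstacle to be making the orientation and sign bookkeeping of the closing-up relation fully precise; once the directions $d_\alpha,d_\beta,d_\gamma$ and their single relation $d_\alpha+d_\beta+d_\gamma\equiv 0$ are in hand, the conclusion is immediate.
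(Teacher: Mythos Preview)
The paper does not supply a proof of this proposition; it states it immediately after establishing that the three edge directions $d_\alpha,d_\beta,d_\gamma$ of such a triangle satisfy $d_\alpha+d_\beta+d_\gamma\equiv 0$ in $\RR^n/\RR\vones$, and evidently expects the reader to draw the conclusion from that alone. Your argument is exactly the elaboration of this one-line observation: the closing-up relation together with the linear independence of any two of $d_\alpha,d_\beta,d_\gamma$ forces a single scalar $s$ to control all three edge lengths, so one length vanishing forces all to vanish. That is correct and matches the paper's implicit reasoning.

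Two small remarks. First, your opening qualification about regularity is harmless but unnecessary here: the whole discussion in this section is about regular matroid subdivisions (points of $\Dr{3}{n}$), where the metric embedding is available by definition; you need not invoke ``rays'' specifically. Second, the ``second step'' about moving along faces of the secondary cone and $s$ being a linear functional of the lifting is more machinery than needed. Coarsening replaces one regular subdivision by another, and the dual $2$-cell to $C$ (if $C$ is still interior codimension~$2$) is again a polygon in $\RR^n/\RR\vones$ with edges only in the directions $d_\alpha,d_\beta,d_\gamma$; your closing-up argument then applies directly to that polygon, with no limit process required. Your combinatorial variant at the end already captures this.
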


\begin{proposition} \label{CollapseParallelogram}
If one edge of a parallelogram collapses, so does the opposite edge.
\end{proposition}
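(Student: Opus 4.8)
The plan is to exploit the metric realization of the parallelogram that comes with the embedding of the tight-span into $\RR^n/\RR\cdot(1,\dots,1)$ described above. Let $S$ be the parallelogram in question; it is dual to a codimension-$2$ cell $C$ whose matroid has three connected components $\alpha,\beta,\gamma$ partitioning $[n]$, and each edge of $S$ is parallel to one of the vectors $u_\alpha=\sum_{i\in\alpha}e_i$, $u_\beta=\sum_{i\in\beta}e_i$, $u_\gamma=\sum_{i\in\gamma}e_i$. Since a parallelogram has only two edge directions, exactly two of these vectors occur; after relabeling I may assume they are $u_\alpha$ and $u_\beta$, so that $S$ has one opposite pair of edges parallel to $u_\alpha$ and one opposite pair parallel to $u_\beta$.

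First I would record that $u_\alpha$ and $u_\beta$ are linearly independent in the quotient $\RR^n/\RR\cdot(1,\dots,1)$: a relation $c_1u_\alpha+c_2u_\beta=\lambda(u_\alpha+u_\beta+u_\gamma)$ forces $\lambda=0$ upon comparing the $\gamma$-coordinates (the component $\gamma$ is nonempty), and then $c_1=c_2=0$. Traversing the boundary of $S$ produces a closed walk whose edge vectors are $\ell_1u_\alpha$, $\ell_2u_\beta$, $-\ell_3u_\alpha$, $-\ell_4u_\beta$ for positive edge lengths $\ell_1,\dots,\ell_4$; since these sum to $0$, linear independence yields $\ell_1=\ell_3$ and $\ell_2=\ell_4$. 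In particular the two opposite edges parallel to $u_\alpha$ have \emph{equal} length, as they must for any parallelogram.

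The remaining, and main, point is to connect the combinatorial notion of an edge \emph{collapsing} under coarsening to the vanishing of this length. Here I would use that the edge lengths are linear functions of the lifting function on the secondary cone of the subdivision carrying $S$, and that the identity $\ell_1=\ell_3$ holds identically on that cone. A coarsening corresponds to a lifting function $\pi'$ in the closure of the cone; by continuity the equality $\ell_1=\ell_3$ persists at $\pi'$, and an edge collapses precisely when its length vanishes at $\pi'$. Hence $\ell_1=0$ if and only if $\ell_3=0$, which is exactly the assertion that one edge of the parallelogram collapses if and only if the opposite one does. The subtlety to watch is that this argument lives in the regular, embedded setting; since the rays of $\Dr 3n$ under study are regular subdivisions, restricting to regular coarsenings is harmless.
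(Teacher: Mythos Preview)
The paper gives no proof of this proposition; it is stated as an ``observation'' immediately after the authors set up the embedding of the tight-span into $\RR^n/\RR\cdot(1,\dots,1)$ and identify the three possible edge directions $u_\alpha,u_\beta,u_\gamma$. Your argument makes that observation precise in exactly the way the surrounding text suggests: in the embedded picture opposite sides of the parallelogram have equal length, the vertex positions of the tight-span (and hence the edge lengths) depend linearly on the lifting on the secondary cone carrying $\Sigma$, the identity $\ell_1=\ell_3$ therefore extends to the closure of that cone, and so the two opposite edges collapse together. This is correct.

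Two minor remarks. Your first paragraph---checking independence of $u_\alpha,u_\beta$ and deducing $\ell_1=\ell_3$, $\ell_2=\ell_4$---is just the verification that $S$ is a Euclidean parallelogram in the embedded picture; it is fine but the content is in your second paragraph. Your closing caveat about regularity is well placed but not a genuine restriction here: the Dressian by definition parametrizes \emph{regular} matroid subdivisions, and the coarsenings relevant to the discussion of rays are exactly those coming from faces of secondary cones, so the metric argument applies throughout.
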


These conditions make it possible in many cases to recognize that a subdivision cannot be
non-trivially coarsened.  For example, if any edge in Figure~\ref{fig:non-planar} is collapsed, then
all $7$ edges must be. Collapsing all the edges gives the trivial subdivision.  So the tight-span in
Figure~\ref{fig:non-planar} cannot be non-trivially coarsened.

\begin{figure}[hbt]
  \includegraphics{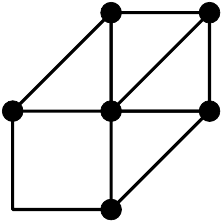}
  \caption{Configuration of six points corresponding to a coarsest tropical subdivision that contains a quadrangle}
  \label{fig:4gon}
\end{figure}

In a preprint version of this paper, it was claimed that coarsest subdivisions only contain
triangles, not faces with higher numbers of edges.  We can use the above propositions, together with
the methods of Section~\ref{sec:Polytope}, to refute this claim: Figure~\ref{fig:4gon} shows six
points in $\mathbb{R}^2$ whose tropical convex hull is a hexagon.  The natural polyhedral
subdivision of this hexagon is into four triangles and a parallelogram.  The corresponding tropical
$3$-plane in $9$-space thus also has a tight-span which consists of four triangles and a
parallelogram arranged around a central point.  Again, Propositions~\ref{CollapseTriangle}
and~\ref{CollapseParallelogram} show that, if we collapse any edge, we must collapse all of them.
So this subdivision is coarsest, even though it has a quadrilateral face.

\begin{example}\label{ex:DR38-rays}
  There are $15,\!470$ rays of $\Dr 38$ coming in twelve symmetry classes. We begin with listing the
  ones that were known before:
  \begin{itemize}
  \item The simplest ones are the splits. According to \eqref{eq:n_splits} the total number of
    splits of $\Hypersimplex{3}{8}$ equals $210$. This gives four distinct splits up to symmetry, in
    the notation of~\cite{MR2502496} these correspond to the $(6,2;1)$-, $(3,5;1)$-, $(5,3;1)$-, and
    $(4,4;1)$-hyperplanes.  We obtain $28$, $56$, $56$ and $70$ rays per orbit, respectively. The
    corresponding tropically rigid configuration are shown in the top part of
    Figure~\ref{fig:trop-rigid}.
  \item Furthermore, in \cite[Sec.~6]{Herrmann09} it is shown that certain \emph{$3$-splits},
    that is, coarsest subdivisions with three maximal faces, (or, equivalently, subdivisions, whose
    tight-span is a triangle) correspond to rays of $\Dr kn$. Specifically,
    \cite[Cor.~6.4]{Herrmann09} tells us that there are $980$ of these $3$-splits coming in two
    equivalence classes. These are also obtained by the two tropically rigid configuration in
    Figure~\ref{fig:trop-rigid}, where the left one gives $420$ and the right one $560$ rays.
  \end{itemize}
  Further rays are given by our tropically rigid point configurations of Section~\ref{sec:trop-rid}:
  \begin{itemize}
  \item The next tight-span that might occur are two triangles connected by an edge. The
    corresponding subdivisions come in three equivalence classes and may be obtained by the three
    tropical point configurations to the right of Figure~\ref{fig:trop-rigid}. They give us $840$,
    $1,\!260$ and $1,\!680$ rays, respectively, yielding a total of $3,\!780$ with such a
    tight-span.
  \item The remaining rays of the Dressian coming from tropical point configurations are those
    depicted in Figure~\ref{fig:rigid-special}, hence corresponding to three or four connected
    triangles. Both of these give rise $5,\!040$ rays.
  \end{itemize}

  However, not all tight-spans of rays of $\Dr 3n$ need to be planar, hence not all rays may be
  induced by tropically rigid point configurations. Indeed, our computations show that the simplest
  non-planar simplicial complex occurs as the tight-span of a ray of $\Dr 38$. This tight-span is
  depicted in Figure~\ref{fig:non-planar} and it gives rise to the remaining $420$ rays.

  Altogether, Figures~\ref{fig:rigid-special},~\ref{fig:trop-rigid} and~\ref{fig:non-planar} give
  the tight-spans of all rays of $\Dr38$. Explicit coordinates for the first eleven rays can be
  obtained by applying the map $\tau$ to the tropical point configurations in
  Figures~\ref{fig:rigid-special} and \ref{fig:trop-rigid}.
  The last ray is a $0/1$-vector of length $\tbinom{8}{3}=56$ which maps the vertices of
  $\Delta(3,8)$ corresponding to the following $30$ three-element subsets of $\{1,2,\dots,8\}$ to $0$, and the
  $26$ remaining ones to~$1$:
  \begin{align*}
    123\ 124\ 126\ 127\ 128\ 134\ 136\ 137\ 138\ 234\ 235\ 236\ 237\ 238\ 245\ \\
    247\ 248\ 256\ 257\ 258\ 267\ 268\ 345\ 347\ 348\ 356\ 357\ 358\ 367\ 368 \, .
  \end{align*}
  These sets form the bases of a matroid.
\end{example}

\begin{figure}[htb]
  \includegraphics{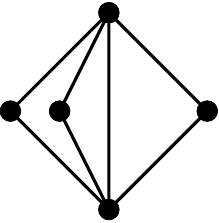}
  \caption{Non-planar tight-span of a ray of $\Dr 38$.  This is a $2$-dimensional pure simplicial complex of three triangles sharing a common edge.}
  \label{fig:non-planar}
\end{figure}

\section{Computational Results}
\label{sec:computation}

Here we explain how we computed the entire fan $\Dr{3}{8}$ with
\texttt{polymake}~\cite{DMV:polymake}.  Fundamentally, we used a similar approach as for computing
$\Dr{3}{7}$ in \cite{MR2515769}.  However, we re-implemented that algorithm with a number of
modifications.  The key new idea is to have a short canonical description for each matroid
subdivision such that it is fast to recognize duplicates.  For $\Dr{3}{7}$ this gives a speed up
factor of about four.  At the same time, our new approach is more efficient with respect to memory
consumption.  We verified the computational results for $\Dr 36$ from \cite{MR2071813} and
$\Dr{3}{7}$ from \cite{MR2515769} with our new implementation.

A cone of the polyhedral fan $\Dr kn$ is defined by deciding for each $3$-term Pl\"ucker
relation~\eqref{eq:3term} which two of the three terms attain the maximum.  This is to say, such a
cone is defined by equations and inequalities of the form
\begin{align}\label{eq:trop3term}
  \pi(\rho ij)+\pi(\rho\ell m) \ = \ \pi(\rho i\ell)+\pi(\rho jm) \ \ge \ \pi(\rho im)+\pi(\rho j\ell)
\end{align}
where $\rho$ is a subset of $[n]$ of cardinality $k-2$ and $i,j,\ell,m\in[n]\setminus \rho$ are
pairwise distinct.  Enumerating all maximal cones in $\Dr{k}{n}$ is now equivalent to going through
all possible $3^\nu$ combinations, where $\nu=\tbinom{n}{4} \cdot \tbinom {n-4}{k-2}=\tbinom{n}{k-2}
\cdot \tbinom{n-k+2}{4}$ is the number of $3$-term Pl\"ucker relations.  This direct but na\"ive
approach is, of course, infeasible in terms of complexity.

For $\rho\in\tbinom{[n]}{k-2}$ and $i,j,\ell,m\in[n]\setminus \rho$ the six points
\[
e_{\rho ij} \, , \quad e_{\rho i\ell} \, , \quad e_{\rho im} \, , \quad e_{\rho j\ell} \, , \quad
e_{\rho jm} \, , \quad e_{\rho \ell m}
\]
are the vertices of a regular octahedron $O$, which forms a $3$-face of the hypersimplex
$\Delta(k,n)$.  The three choices to pick the maximum in \eqref{eq:trop3term} correspond to the
three ways to split $O$ into two square pyramids.  These are called \emph{octahedral splits}.  Each
octahedral split gives one equation and one inequality.

Geometrically, the cones of $\Dr kn$ correspond to matroid decompositions of $\Hypersimplex{k}{n}$.  The
three-dimensional faces of $\Hypersimplex{k}{n}$ are either simplices or octahedra such as $O$.  Each matroid
decomposition of $\Hypersimplex kn$ induces a matroid decomposition on each of its faces.  Simplices do not
admit any non-trivial subdivision (without new vertices).  So only the three splits of the regular
octahedron give rise to a non-trivial matroid subdivisions of any three-dimensional matroid polytope.
This way, each cone of $\Dr kn$ or, equivalently, each matroid subdivision of $\Hypersimplex{k}{n}$ yields
splits on a subset of its octahedral $3$-faces.  We encode this information as a \emph{sequence of
  octahedral splits}: for a fixed linear ordering of all octahedral faces of $\Hypersimplex{k}{n}$ we list
if that face is subdivided or not, and if so, by which of the three possible splits.  The amount of
storage required is two bits per octahedral $3$-face, and this totals to $2\nu$ bits.

A slight variation of the na\"ive algorithm, conceptually, now leads to a first backtracking scheme
to compute $\Dr{k}{n}$ with the fan structure imposed by the Pl\"ucker relations.

\begin{enumerate}
\item Start with the entire space $C=\RR^{\tbinom nk}$ and the empty sequence $L$ of
    octahedral splits.
\item Iterate through all octahedral faces.
\item For each new octahedron, add one of the three
    possible splits to $L$ and intersect $C$ by the corresponding hyperplane and halfspace.
\item If $\dim C$ is too small for a maximal cone, backtrack and try another possibility
    for the same octahedron.
\item If all three splits for an octahedron have been tried, backtrack and try another
    octahedron.
\item If we are at the last octahedron, output $C$ and $L$ continue backtracking.
\end{enumerate}

The output will be a list of all maximal cones of $\Dr{k}{n}$ along with their description as
sequences of octahedral splits. However, due to the vast amount of maximal cones in $\Dr{3}{8}$, the
computation is still infeasible. Therefore, we here give the following modified algorithm, using the
same idea, but computing $\Dr{k}{n}$ iteratively from $\Dr{k}{n-1}$ and taking the known boundary
into account.  This works since each facet of $\Hypersimplex{k}{n}$ is again a hypersimplex, either of type
$\Delta(k-1,n)$ or of type $\Delta(k,n-1)$.  Moreover, each sequence of octahedral splits of a
hypersimplex induces a sequence of octahedral splits on each facet (and thus, inductively, on each
lower dimensional face).

\begin{enumerate}
\item After arriving at a new octahedral face and adding a split, consider the octahedral splits
  induced in each $\Hypersimplex{k}{n-1}$ boundary face of $\Hypersimplex{k}{n}$.
\item Test if there is some matroid subdivision of $\Hypersimplex{k}{{n-1}}$ defining these octahedral splits.
\item If this is not the case go back and try another of the three possibilities to split the octahedron.
\end{enumerate}

With this algorithm we were able to compute all maximal cones of $\Dr{3}{7}$ in around 15 minutes (from
$\Dr 36$ which is computed with either algorithm in a few seconds).  All timings are taken
single-threaded on a machine with AMD Athlon(tm) 64 X2 Dual Core Processor 4200+ (4433.05 bogomips
per core) 4~GB main memory, running Ubuntu 10.04 (Lucid Lynx).  To compute $\Dr 38$, we made the
further modification to assume that for the first Pl\"ucker relations one of the three possibilities
is fixed. In this way, the algorithm does not compute all maximal cones any more, but it is still
guaranteed that we get at least one maximal cone in each symmetry class.  With this reduction, the
computation for $\Dr 38$ took about 200 hours.  The output of this algorithm, however, has to be
processed further to yield anything useful.

To get a complete description of $\Dr 38$ from this result, we first produced a list containing one
representative of each symmetry class of maximal cones together with the corresponding orbit
size. This is obtained by the following algorithm.

\begin{enumerate}
\item Initialize $L$ as the empty list.
\item For each cone compute the lexicographic first cone $C$ in the same orbit.
\item If $C$ is in $L$ proceed to the next cone, otherwise add $C$ to $L$ and compute the orbit of
  $C$ and store the size.
\end{enumerate}

This computation took around 230 hours.  Of course, it would be faster to store all cones
from all orbits during the computation, however this not feasible in terms of space.

With this list of maximal cones the following further steps were necessary to compute the
$f$-vector and the $f$-vector up to symmetry:

\begin{enumerate}
\item For one maximal cone from each orbit, compute the rays.
\item Compute one ray form each orbit and then all rays and store them in a list~$R$.
\item Each maximal cone is no translated in a description by the indices of its rays in~$R$.
\item For one maximal cone in each orbit compute all faces of a fixed dimension $d$, and represent
  it by the indices of its rays in~$R$.
\item For each possible dimension $d$, go through all faces so computed and compute one from each
  orbit together with the orbit size similar as above.
\end{enumerate}

For dimension $6$, the computations took about 14.5 hours, this was the maximum.  The final result can
be summed up as follows.

\begin{theorem}\label{thm:polymake}
  The Dressian $\Dr38$ is a non-pure non-simplicial nine-dimensional polyhedral fan with $f$-vector
  \begin{equation*}
    \begin{aligned}
      (1;&\,15,\!470;\,642,\!677;\,8,\!892,\!898 ;\,57,\!394,\!505 ;\,194,\!258,\!750
      ; \\
      &\,353,\!149,\!650 ;\,324,\!404,\!880;\,117,\!594,\!645 ;\,113,\!400 )\,.
    \end{aligned}
  \end{equation*}
  Modulo the natural $\Sym(8)$-symmetry, the $f$-vector reads
  \[
  (1;\,12;\,155;\, 1,\!149;\, 5,\!013;\,12,\!737 ;\,18,\!802 ;\,14,\!727 ;\,4,\!788;\,14)\,.
  \]
  There are $116,\!962,\!265$ maximal cones, $113,\!400$ of dimension $9$ and
  $116,\!848,\!865$ of dimension $8$. Up to symmetry, there are $4,\!748$ maximal cones,
  $14$ of dimension $9$ and $4,\!734$ of dimension $8$.
\end{theorem}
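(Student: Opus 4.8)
The statement is a computational one, so the plan is to establish it by an explicit enumeration of all cones of the fan $\Dr 38$ carried out in \texttt{polymake}, and then to read off the various face counts from that enumeration. The starting point is the encoding of a maximal cone of $\Dr kn$ as a \emph{sequence of octahedral splits}: every $3$-face of $\Hypersimplex 38$ is either a simplex (which admits no nontrivial matroid subdivision) or a regular octahedron $O$ (which admits exactly three), so a matroid subdivision of $\Hypersimplex 38$ is recorded by specifying, for each of the $\nu=\tbinom 81\cdot\tbinom 74$ octahedral faces, which of the three splits, if any, it carries. First I would set up the obvious backtracking search over these $3^\nu$ possibilities, intersecting the current cone $C\subseteq\RR^{\binom 83}$ with the hyperplane and halfspace prescribed by each chosen octahedral split and pruning a branch as soon as $\dim C$ drops below the dimension required for a maximal cone.

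Because the raw search space is astronomically large, the key step is to make the enumeration feasible by computing $\Dr 38$ inductively from $\Dr 37$. Each facet of $\Hypersimplex 38$ is again a hypersimplex, of type $\Delta(2,8)$ or $\Delta(3,7)$, and each sequence of octahedral splits restricts to a sequence of octahedral splits on every facet. So whenever the search assigns a split, I would immediately check whether the induced pattern on each $\Delta(3,7)$ facet actually extends to a genuine matroid subdivision of $\Hypersimplex 37$, which is already known from \cite{MR2515769}; if it does not, that branch is pruned at once. This boundary test, together with a short canonical form for each partial subdivision that makes duplicate recognition fast, is what brings the computation into range, and it is validated by re-deriving the published fans $\Dr 36$ \cite{MR2071813} and $\Dr 37$ \cite{MR2515769} with the same implementation.

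To avoid handling all $116,\!962,\!265$ maximal cones at once, I would exploit the natural $\Sym(8)$-action: fix the split at the first few Pl\"ucker relations so that the search is still guaranteed to produce at least one representative of every symmetry orbit of maximal cones, then for each output cone compute the lexicographically smallest cone in its $\Sym(8)$-orbit, discard duplicates, and record the orbit size. This yields the list of $4,\!748$ orbit representatives of maximal cones ($14$ of dimension $9$ and $4,\!734$ of dimension $8$), and multiplying by orbit sizes recovers the total counts. Finally, to obtain the full $f$-vector I would, for one representative in each orbit of maximal cones, enumerate its faces of each fixed dimension $d$, express each face by the indices of its rays in a global list, and again reduce modulo $\Sym(8)$ by lexicographic canonicalization; summing orbit sizes dimension by dimension produces both the $f$-vector and its symmetry-reduced version.

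The main obstacle is not any isolated mathematical difficulty but the combination of sheer scale and the demand for trustworthiness: the enumeration runs for hundreds of hours and manipulates hundreds of millions of cones, so the hard part is keeping the computation simultaneously memory-feasible (one cannot afford to store all orbits during the search) and verifiably correct. Beyond the consistency checks against $\Dr 36$ and $\Dr 37$, I would guard against counting errors by confirming that the reported face numbers satisfy the expected Euler-characteristic and incidence relations, so that the final $f$-vectors can be asserted with confidence.
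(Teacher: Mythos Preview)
Your proposal is correct and matches the paper's own computational strategy essentially step for step: the backtracking over sequences of octahedral splits, the inductive pruning via the $\Delta(3,7)$ boundary facets using the known $\Dr 37$, the symmetry reduction by fixing the first Pl\"ucker relation and lexicographic canonicalization of orbits, and the dimension-by-dimension face enumeration indexed by rays. The only addition is your proposed Euler-characteristic sanity check, which the paper does not mention but which is harmless.
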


\begin{corollary}
  There are $4,\!748$ distinct combinatorial types of generic tropical planes in~$\TT^6$.
\end{corollary}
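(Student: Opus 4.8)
The plan is to read off the count directly from the combinatorial data of Theorem~\ref{thm:polymake}, once the dictionary between cones of the Dressian and tropical planes is in place. First I would recall from Proposition~\ref{prop:matroid_decomposition} that a lifting function in the relative interior of a cone of $\Dr38$ is a finite tropical Pl\"ucker vector, and hence defines a tropical plane together with its dual matroid subdivision of $\Hypersimplex38$. This subdivision, equivalently its tight-span, is constant on the relative interior of each cone and records the combinatorial type of the corresponding tropical plane. In this way the combinatorial types of tropical planes are indexed by the cones of $\Dr38$, with coarsenings of a subdivision corresponding to the faces of its cone.

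Next I would identify the \emph{generic} tropical planes with the maximal cones of $\Dr38$: a tropical plane is generic exactly when its matroid subdivision is finest, i.e.\ admits no nontrivial refinement, and these are precisely the subdivisions induced from the relative interior of a maximal cone. The point requiring attention here is that $\Dr38$ is non-pure (Theorem~\ref{thm:polymake}): a maximal cone may have dimension $8$ rather than $9$, but a cone being maximal --- of either dimension --- already forces its matroid subdivision to be finest, so each such cone still yields a bona fide generic tropical plane. Hence both the $14$ orbits of $9$-dimensional maximal cones and the $4,\!734$ orbits of $8$-dimensional maximal cones contribute.

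Finally I would invoke the $\Sym(8)$-action on $\Dr38$ coming from relabeling the ground set $[8]$. Two generic tropical planes share a combinatorial type exactly when their matroid subdivisions are isomorphic, and such an isomorphism is realized by a coordinate permutation; so combinatorial types are in bijection with the $\Sym(8)$-orbits of maximal cones. Theorem~\ref{thm:polymake} counts these orbits, giving $14+4,\!734=4,\!748$, which is the asserted number.

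The arithmetic is trivial; the substance lies in ensuring that the two identifications are legitimate. The first is the non-purity subtlety just discussed, namely that maximal cones of dimension $8$ must be counted on an equal footing with those of dimension $9$. The second is that combinatorial equivalence of the resulting tropical planes coincides exactly with the $\Sym(8)$-orbit relation on cones, so that the orbit enumeration carried out in \texttt{polymake} --- which uses the sequence of octahedral splits as a complete invariant of a cone --- is the same as the enumeration of combinatorial types. I expect this second point to be the main thing to pin down carefully, since it rests on the combinatorial type of a generic tropical plane being a faithful record of its labeled matroid subdivision up to symmetry.
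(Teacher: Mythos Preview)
Your proposal is correct and matches the paper's approach exactly: the corollary is stated without proof, as an immediate reading of the count of $\Sym(8)$-orbits of maximal cones from Theorem~\ref{thm:polymake}, and you have simply spelled out the underlying identification between generic tropical planes (finest matroid subdivisions) and maximal cones of $\Dr38$, together with the arithmetic $14+4{,}734=4{,}748$. Your explicit attention to the non-purity of the fan---so that both the $9$-dimensional and the $8$-dimensional maximal cones contribute---is precisely the point that makes the reading correct.
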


\begin{proposition}
  All rays of $\Dr38$ are rays of $\Gr38$.
\end{proposition}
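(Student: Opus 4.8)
The plan is to use the complete classification of rays in Example~\ref{ex:DR38-rays}, which sorts the $15{,}470$ rays of $\Dr38$ into twelve symmetry classes, and to verify membership in $\Gr38$ one class at a time. For the eleven classes with planar tight-spans, namely those of Figures~\ref{fig:rigid-special} and~\ref{fig:trop-rigid}, no further work is needed: each such ray comes from a tropically rigid point configuration in $\TT^2$, and Corollary~\ref{cor:trop-rigid-ray} already guarantees that such a configuration yields a ray of $\Dr38$ which is simultaneously a ray of $\Gr38$. Hence the whole statement reduces to the single remaining class, the non-planar ray $\pi$ of Figure~\ref{fig:non-planar}.

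Before attacking it, I would record why the cheap test cannot settle this case. The sufficient criterion noted after Proposition~\ref{prop:in-Grassmannian} would establish $\pi\in\Gr38$ as soon as $\tau^\sigma_{\Phi^\sigma(\pi)}=\pi$ for some vertex $e_\sigma$. But if that equality held, Theorem~\ref{thm:tight-span} would identify $\Atightspan{\Sigma}$ with the tight-span of the subdivision that $\Phi^\sigma(\pi)$ induces on the vertex figure, and the latter is the tropical complex of a point configuration in $\TT^2$, hence planar. Since the tight-span of $\pi$ is the non-planar complex of three triangles sharing a common edge, the criterion must fail for every $\sigma$, so a genuine realization is unavoidable.

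I would therefore exhibit $\pi$ directly as a tropicalization. Concretely, I would produce a $3\times 8$ matrix $A$ over the Puiseux series field $K=\CC\{\{t\}\}$ whose maximal minors $\frp(\sigma)$, for $\sigma\in\binom{[8]}{3}$, satisfy $\val(\frp(\sigma))=0$ on the $30$ subsets listed in Example~\ref{ex:DR38-rays} and $\val(\frp(\sigma))=1$ on the remaining $26$. The constant ($t^0$) parts of the entries of $A$ must represent the matroid $\cM$ whose bases are exactly those $30$ triples, while the $t^1$-corrections are chosen so that every non-basis minor acquires valuation exactly $1$ rather than dropping below it. Given such an $A$, its row space $L\subseteq K^8$ is a three-dimensional subspace with valuated Plücker vector $\val\circ\frp=\pi$, so that $\pi=\Tropicalization{L}$ lies in $\Gr38$ by definition. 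Finally, since $\Gr38\subseteq\Dr38$ as sets and $\pi$ spans a coarsest cone of $\Dr38$, the same cone is a coarsest cone of $\Gr38$; that is, $\pi$ is a ray of $\Gr38$, completing the last class.

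The hard part will be precisely the explicit realization of $\pi$ in the third paragraph: one must first certify that $\cM$ is representable over a field of characteristic zero, and then lift a representation to $K$ with the prescribed minor valuations, checking all $56$ valuations by hand or by computer. This is the only step that does not follow formally from the results already established, and it is forced upon us by the non-planarity obstruction of the second paragraph.
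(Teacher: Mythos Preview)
Your reduction to the twelve symmetry classes and your handling of the eleven planar ones via Corollary~\ref{cor:trop-rigid-ray} match the paper exactly; your aside on why the sufficient criterion after Proposition~\ref{prop:in-Grassmannian} must fail for the non-planar ray is a correct observation the paper leaves implicit. Where you diverge is on the non-planar ray itself: you propose to exhibit an explicit $3\times 8$ matrix over $\CC\{\{t\}\}$ whose Pl\"ucker valuations realize $\pi$, whereas the paper instead invokes the initial-ideal criterion for membership in a tropical variety. Concretely, using \texttt{Macaulay2} the authors compute the initial ideal $I_r$ of the Pl\"ucker ideal with respect to the weight $\pi$ and verify that $I_r$ contains no monomial; they carry this out over $\ZZ[p_S]$ and observe that the integral Gr\"obner basis has only coefficients $\pm 1$, so $\pi\in\Gr38$ holds over every field, not just in characteristic zero. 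Your route is more constructive---it would furnish an actual witnessing subspace---but it leaves the hardest step undone (you still owe the matrix $A$ and the verification of all $56$ minor valuations) and, as written, only certifies the characteristic-zero Grassmannian. The paper's route trades the explicit witness for a routine computer-algebra check and obtains the characteristic-free statement essentially for free.
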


\begin{proof}
  By Proposition~\ref{prop:in-Grassmannian} all rays coming from tropically rigid point
  configuration are elements of the Grassmannian. So it remains to show that the last ray is
  contained in $\Gr 38$. This is done by explicit computation with the computer algebra
  software \texttt{Macaulay2}~\cite{M2}. We computed the initial ideal $I_r$ of the Pl\"ucker ideal
  defined by the weight vector corresponding to this ray and verified that $I_r$ does not
  contain any monomials. Note, however, that the tropical Grassmannian depends on the characteristic
  of the field considered.  Therefore, we computed over the polynomial ring $\ZZ[p_S]$ with integer
  coefficients.  It turns out that all polynomials in the integral Gr\"obner basis of $I_r$ only
  have non-vanishing coefficients $\pm1$, yielding the result for arbitrary characteristic.
\end{proof}

The data computed as available at \url{http://svenherrmann.net/DR38/dr38.html}.

\section{Questions}

We would like to close this paper with some open questions.

\begin{question}
  Can all coarsest matroid subdivisions of $\Delta(3,n)$ with planar tight-spans be constructed via
  tropical point configurations?
\end{question}

Our computations show that this question has a positive answer for all $n\le 8$.

\medskip

There is a very crude parameter to estimate how complicated a subdivision is from the combinatorial
point of view.  The \emph{spread} of an element of $\Dr kn$ is defined in as the number of maximal
cells of the corresponding subdivision. It was shown in \cite[Prop.~3.7]{MR2515769} that the spread
is not bounded if $n$ increases (with $k$ fixed).

\begin{question}
  What is the maximal \emph{spread} of a ray of $\Dr kn$ (at least for $k=3$)?
\end{question}
 
For $k=3$ and $n=5,6,7,8$, the values are $2,3,4,6$, respectively.

\medskip

It is known that the natural fan structures of the tropical Grassmannians $\Gr kn$ and the Dressians
$\Dr kn$ differ for $k\ge 3$.  However, the following is unclear to us.

\begin{question}
  Are the rays of $\Dr kn$ always rays of $\Gr kn$?
\end{question}

This was known to be true previously for $k=3$ and $n\leq 7$.  It follows from our results in
Section~\ref{sec:computation} that it also holds for $k=3$ and $n=8$.  The converse is not true:
there are rays of $\Gr 37$ that are not rays of $\Dr{3}{7}$.

\medskip

There is also a number of obvious computational challenges.

\begin{question}
  Is it feasible to compute $\Gr{3}{8}$?
\end{question}

With currently available implementations of the relevant algorithms, such as
\texttt{Gfan}~\cite{gfan}, the answer seems to be \emph{no}, even for some fixed characteristic.  It
should be feasible, however, to sample (few) points from each maximal face of the Dressian
$\Dr{3}{8}$ and check if they are contained in the tropical Grassmannian of a fixed characteristic.
Doing the necessary Gr\"obner bases computations over the integers, to obtain results valid for all
characteristics, is more demanding.

Even if we cannot tell exactly how $\Dr{3}{9}$, $\Dr{3}{10}$, etc.\ look like, by now we have
gathered substantial information about the Dressians $\Dr{3}{n}$ for arbitrary $n$.  However, it is
expected that entirely new features arise if one replaces the first parameter by $4$.  So another
very serious challenge is to answer the following.

\begin{question}
  Is it feasible to compute $\Dr{4}{8}$?
\end{question}

\bibliographystyle{amsplain}
\bibliography{main}

\end{document}